\DeclareMathOperator*{\esssup}{ess\,sup}
\theoremstyle{plain}
\newtheorem{thm}{Theorem}[section]
\newtheorem{lem}[thm]{Lemma}
\theoremstyle{definition}
\newtheorem{defn}[thm]{Definition}
\newtheorem{rem}[thm]{Remark}
\newtheorem{obs}[thm]{Observation}
\title[Weighted tent spaces with Whitney averages]
{Weighted tent spaces with Whitney averages: factorization, interpolation and duality}
\author{Yi Huang}
\address{Univ. Paris-Sud, Laboratoire de Math\'ematiques, UMR 8628 du CNRS, F-91405 Orsay}
\email{Yi.Huang@math.u-psud.fr}
\date{\today}
\subjclass[2010]{42B35, 46E30.} 
\keywords{Tent spaces, Whitney averages, strong factorization, Calder\'on's product,
quasi-Banach complex interpolation, multipliers and duality theory.}
\newcommand{\bRt}{\mathbb{R}^{n+1}_+}
\newcommand{\bRn}{\mathbb{R}^n}
\newcommand{\tps}{\texorpdfstring}
\newcommand{\loc}{\text{{\rm loc}}}
\newcommand{\pd}{\partial}
\newcommand{\bR}{{\mathbb R}}
\newcommand{\bC}{{\mathbb C}}
\newcommand{\bN}{{\mathbb N}}
\newcommand{\cA}{{\mathcal A}}
\newcommand{\cC}{{\mathcal C}}
\newcommand{\cM}{{\mathcal M}}
\newcommand{\cN}{{\mathcal N}}
\newcommand{\cV}{{\mathcal V}}
\newcommand{\cW}{{\mathcal W}}
\newcommand{\cX}{{\mathcal X}}
\newcommand{\cY}{{\mathcal Y}}
\newcommand{\EP}{\mathcal{E}\,} 
\newcommand{\wt}{\widetilde}
\newcommand{\wh}{\widehat}
\begin{document}

\begin{abstract}
In this paper, we introduce a new scale of tent spaces which covers, 
the (weighted) tent spaces of Coifman-Meyer-Stein and of Hofmann-Mayboroda-McIntosh, 
and some other tent spaces considered by 
Dahlberg, Kenig-Pipher and Auscher-Axelsson in elliptic equations.  
The strong factorizations within our tent spaces, 
with applications to quasi-Banach complex interpolation and to multiplier-duality theory, 
are established. This way, we unify and extend the corresponding results obtained by 
Coifman-Meyer-Stein, Cohn-Verbitsky and Hyt\"onen-Ros\'en.
\end{abstract}

\maketitle

\section*{0. Basic notations and article structure}
Let $\bRt=\bRn\times\bR_+=\bRn\times(0,\infty)$ be the usual upper half space in $\bR^{n+1}$.
Points in $\bRn$ (respectively in $\bRt$) will be generally denoted by the letters 
$x$ or $z$ (respectively by $(y,t)$ or $(z,s)$). For a point $(y,t)$ in $\bRt$, 
we let $B(y,t)=\{z\in\bRn\mid |z-y|<t\}$ lie in the boundary $\bRn=\pd\bRt$.
Here and below, the capital letter $B$ denotes an open ball in $\bRn$,
and $|\cdot|$ denotes the Euclidean distance on $\bRn$.

Given $\alpha>0$, 
we shall denote the cone, of aperture $\alpha$ and with vertex $x\in\bRn$, by 
$$\Gamma_{\alpha}(x):=\{(y,t)\in\bRt\mid|y-x|<\alpha t\}=
\{(y,t)\in\bRt\mid B(y,\alpha t)\ni x\},$$ 
and shall denote the tent, of aperture $\alpha$ and with base $B\subset\bRn$, by 
$$\wh{B^\alpha}:=\bigg(\bigcup_{x\in B^c}\Gamma_{\alpha}(x)\bigg)^c=
\{(y,t)\in\bRt\mid B(y,\alpha t)\subset B\}.$$ 
If $\alpha=1$, we write the two standard objects simply as $\Gamma(x)$ and $\wh{B}$. 

Surrounding a point $(y,t)\in\bRt$, 
we construct its \textit{Whitney box} as
$$W(y,t):=\{(z,s)\in\bRt\mid|z-y|<\alpha_1 t,\alpha_2^{-1}t<s<\alpha_2t\}.$$
Here, the two numbers $(\alpha_1,\alpha_2)$ with $\alpha_1>0$ and $\alpha_2>1$, 
are called the \textit{Whitney parameters}. 
They are said to be \textit{consistent} if $0<\alpha_1<\alpha_2^{-1}<1$. 

Throughout this article,
the set of Vinogradov notations $\{\lesssim, \simeq, \gtrsim\}$ will be used.
For two quantities $a$ and $b$, which can be function values, 
set volumes, function norms or anything else, the term $a\lesssim b$ means that
there exists a constant $C>0$, which depends on parameters at hand,
such that $a\leq C b$. In a similar way, $a\gtrsim b$ means $b\lesssim a$, 
and, $a\simeq b$ means both $a\lesssim b$ and $a\gtrsim b$.

This paper is organized as follows. 

\begin{itemize}
\item Section \ref{sec:defn}. We define in Definition \ref{defn:tentspaces}
our scale of tent spaces $T^{p,r}_{q,\beta}$ systematically. 
At the end of this section, we will also discuss some basic function space properties, 
such as convexity and separability, of these new tent spaces.

\item Section \ref{sec:coin}. We show that the definition of our tent spaces 
is independent of the aperture used in cones and tents, 
and of the pair of Whitney parameters used in Whitney boxes. As a reward, 
we can see for $r=q$, the coincidence (Theorem \ref{thm:coincidence}) 
of our tent spaces with the classical tent spaces of Coifman-Meyer-Stein
and the weighted tent spaces of Hofmann-Mayboroda-McIntosh.

\item Section \ref{sec:fac} and Section \ref{sec:proof}. 
The core (endpoint) factorization theorem (Theorem \ref{thm:factorizationend}) 
is presented in Section \ref{sec:fac}, 
with its full proof postponed to Section \ref{sec:proof}.
Together with a multiplication lemma,
we show conditionally on Theorem \ref{thm:factorizationend}
the general multiplication and factorization theorem (Theorem \ref{thm:factorization}).

\item Section \ref{sec:inter} and Section \ref{sec:muldual}.
Under the general multiplication and factorization theorem, 
the quasi-Banach complex interpolation
(Theorem \ref{thm:Interpolation}) and the multiplier-duality results 
(Theorem \ref{thm:multiplier} and Theorem \ref{thm:duality}) 
will be established in Section \ref{sec:inter} and Section \ref{sec:muldual} respectively. 
There, we will also make a detailed connection with the corresponding known results on 
interpolation, multiplication, factorization and duality
of tent spaces, which are mainly obtained by Coifman-Meyer-Stein, 
Cohn-Verbitsky and Hyt\"onen-Ros\'en.
\end{itemize}

\section{Definitions of the tent spaces \tps{$T^{p,r}_{q,\beta}$}{}} \label{sec:defn}
Let $r\in(0,\infty]$. 
By $L^r_{\loc}(\bRt;\bC)$, we mean the class of complex-valued measurable functions 
which are defined on $\bRt$ and locally in $L^r$. 
This interpretation also makes sense when $r=\infty$.
For $r\in(0,\infty)$ and $f\in L^r_{\loc}(\bRt;\bC)$, 
denote the (unweighted) \textit{$L^r$-Whitney average} of $f$ on $W(y,t)$ by 
$$\cW_r(f)(y,t):=|W(y,t)|^{-1/r}\|f\|_{L^r(W(y,t),\,dzds)},$$
while for $r=\infty$, we take the usual essential supremum interpretation
$$\cW_\infty(f)(y,t):=\esssup_{(z,s)\in W(y,t)}|f(z,s)|.$$
Note that we use the curled $\cW$ to distinguish it as an averaging functional.

Here and below, apart from the Euclidean distance,
$|\cdot|$ also denotes the moduli of complex values or the set volumes in $\bRn$ and $\bRt$. 

\begin{defn} \label{defn:tentspaces}
$I)$ For $0<p,q\leq\infty$, 
we first define in $L^q_{\loc}(\bRt;\bC)$ the scale $T^p_q$ of 
\textit{tent spaces} into the following four non-overlapping categories.

$A)$ $0<p,q<\infty$. In this case, we let
$$T^{p}_q:=\{g\mid \cA_{q}(g)\in L^p(\bRn)\}\,\,\,\,\text{and}\,\,\,\,
\|g\|_{T^p_q}:=\|\cA_q(g)\|_{L^p},$$
where the \textit{conical $q$-functional} $\cA_q$ is defined as
$$\cA_q(g)(x):=\bigg(\iint_{\Gamma(x)}|g(y,t)|^q\frac{dydt}{t^{n+1}}\bigg)^{1/q},\,x\in\bRn.$$

$B)$ $0<q<p=\infty$. In this case, we let 
$$T^{\infty}_q:=\{g\mid\cC_q(g)\in L^\infty(\bRn)\}\,\,\,\,\text{and}\,\,\,\,
\|g\|_{T^{\infty}_q}:=\|\cC_q(g)\|_{L^\infty},$$
where the \textit{Carleson $q$-functional} $\cC_q$ is defined as
$$\cC_q(g)(x):=\sup_{B\ni x}|B|^{-1/q}
\bigg(\iint_{\wh{B}}|g(y,t)|^q\frac{dydt}{t}\bigg)^{1/q},\,x\in\bRn.$$

$C)$ $0<p<q=\infty$. In this case, we let 
$$T^p_{\infty}:=\{g\mid\cN(g)\in L^p(\bRn)\}\,\,\,\,\text{and}\,\,\,\,
\|g\|_{T^{p}_{\infty}}:=\|\cN(g)\|_{L^p},$$
where the \textit{non-tangential maximal functional} $\cN$ is defined as
$$\cN(g)(x):=\sup_{(y,t)\in\Gamma(x)}|g(y,t)|,\,x\in\bRn.$$

$D)$ $p=q=\infty$. In this case, we simply let $T^{\infty}_{\infty}:=L^\infty(\bRt)$.\\
Let $\beta\in\bR$. We also define the scale $T^p_{q,\beta}$ of \textit{weighted tent spaces} by
$$T^{p}_{q,\beta}:=\{g\mid g(y,t)t^{-\beta}\in T^{p}_q\}\,\,\,\,\text{and}\,\,\,\,
\|g\|_{T^{p}_{q,\beta}}:=\|g(y,t)t^{-\beta}\|_{T^{p}_q}.$$

$II)$ Given $0<r\leq\infty$ and $\beta\in\bR$, 
and assume that the pair of Whitney parameters $(\alpha_1,\alpha_2)$ is consistent. 
Then corresponding to each category above,
we define in $L^r_{\loc}(\bRt;\bC)$ the scale $T^{p,r}_{q}$ 
of \textit{tent spaces with Whitney averages}
by $$T^{p,r}_{q}:=\{f\mid \cW_r(f)\in T^p_q\}\,\,\,\,\text{and}\,\,\,\,
\|f\|_{T^{p,r}_q}:=\|\cW_r(f)\|_{T^p_q},$$
and the scale $T^{p,r}_{q,\beta}$ of \textit{weighted tent spaces with Whitney averages}
by $$T^{p,r}_{q,\beta}:=\{f\mid f(z,s)s^{-\beta}\in T^{p,r}_q\}\,\,\,\,\text{and}\,\,\,\,
\|f\|_{T^{p,r}_{q,\beta}}:=\|f(z,s)s^{-\beta}\|_{T^{p,r}_q}.$$
\end{defn}

In the above definitions, the $L^r$-Whitney average and the weight $\beta$ 
are required for the applications to boundary value problems of
second order elliptic PDEs in \cite{AAInv}.
In practice the role of $\beta$ is a regularity index,
and the weight constraint $\beta\in[-2/q,0]$, 
with $\beta=0$ if $q=\infty$, is taken in \cite{AAInv}. 

\begin{rem} \label{rem:iso}
By definition $T^{p}_{q,0}=T^{p}_{q}$ and $T^{p,r}_{q,0}=T^{p,r}_{q}$.
Moreover, $T^{p}_{q,\beta}$ is isometric to $T^{p}_{q}$ and 
$T^{p,r}_{q,\beta}$ is isometric to $T^{p,r}_{q}$ via
$f\rightarrow \wt{f}$ with $\tilde{f}(z,s)=f(z,s)s^{-\beta}$.
Observe also that since $(z,s)\in W(y,t)$ implies $s\simeq t$,
we have that $f\in T^{p,r}_{q,\beta}\Longleftrightarrow \cW_r(f)\in T^{p}_{q,\beta}$.
\end{rem}

\begin{rem}  \label{rem:classicaltent}
The classical tent spaces of Coifman-Meyer-Stein in \cite{CMS}, where the weight $\beta=0$
and Category $C)$ is smaller\footnotemark
\footnotetext{More precisely, \cite{CMS} requires the additional boundary assumption 
$g\in C_{n.t.}(\bRt;\bC)$, 
meaning that $g$ is a complex-valued continuous function on $\bRt$ and 
also has \textit{non-tangential convergence}: 
$$\lim_{\Gamma(x)\ni(y,t)\rightarrow x}g(y,t)\,\,\,\text{exists for almost every}\,x\in\bRn.$$},
and the weighted tent spaces of Hofmann-Mayboroda-McIntosh in \cite{HMMc},
where only Category $A)$ is considered, 
are all included in our scale $T^p_{q,\beta}$.
The scale $T^{p,r}_{q,\beta}$ with Whitney averages covers the function spaces 
which were introduced in \cite{D} and \cite{KP}, 
and further investigated in \cite{AAInv}, \cite{HR} and \cite{M}. 
In this regard, see the concluding paragraphs of 
Section \ref{sec:muldual} for a detailed correspondence.
Note that we also bring in Category $D)$, where if $0<r<\infty$,
we call functions in $T^{\infty,r}_{\infty}$ the \textit{$r$-Whitney multipliers}.
In the trivial case $p=q=r=\infty$, it is not difficult to observe that
$T^{\infty,\infty}_{\infty}=T^{\infty}_{\infty}=L^\infty(\bRt)$.
\end{rem}

We end this section with several function space properties of our tent spaces.

\underline{Convexity and completeness}. 
Given the tent space $T^{p,r}_{q,\beta}$,
we let $\tau=\min(p,q,r)$.
Observe that when $\tau\geq1$, the space $T^{p,r}_{q,\beta}$ is Banach.
In fact, the triangle inequality simply follows from Minkowski's inequality,
and the completeness can be deduced from the one of $T^{p}_q$,
as $\cW_r(f(z,s)s^{-\beta})\in T^{p}_q$ if $f\in T^{p,r}_{q,\beta}$.
Note that we identify two measurable functions the same 
if they only differ on a set with measure 0.

\underline{Power and convexification}.
For a quasi-Banach function space, 
the trick of taking the powers is particularly useful. 
As for our tent spaces, let 
$$\big[T^{p,r}_{q,\beta}\big]^\theta:=\{f\,\text{measurable}\mid 
|f|^{1/\theta}\in T^{p,r}_{q,\beta}\},\,\theta\in (0,1),$$
equipped with $\|f\|_{[T^{p,r}_{q,\beta}]^\theta}:=
\||f|^{1/\theta}\|^\theta_{T^{p,r}_{q,\beta}}$.
This way, we have the realization 
$$\big[T^{p,r}_{q,\beta}\big]^\theta=
T^{p/\theta,r/\theta}_{q/\theta,\beta\theta},\,\theta\in (0,1).$$
Now for the quasi-Banach $T^{p,r}_{q,\beta}$, with $\tau<1$,
$\big[T^{p,r}_{q,\beta}\big]^\tau$ is then a convexification of $T^{p,r}_{q,\beta}$.

\underline{Separability and density}.
Consider the covering of $\bRt$ by rational rectangles, 
which are of the product form 
$\prod_{i=1}^{n+1}(a_i,b_i)$,
where for $1\leq i\leq n+1$, $a_i$ and $b_i$ are in $\mathbb{Q}$ and $a_{n+1}>0$.
Let $E$ be the linear span on $\mathbb{Q}$ 
of the characteristic functions of rational rectangles.
If $0<p<\infty$, one can show that the countable set
$E$ is dense in $T^{p,r}_{q,\beta}$. 
We also point out that if $\sigma=\max(p,q,r)<\infty$,
the $L^r$ functions which have compact support in $\bRt$ are dense in $T^{p,r}_{q,\beta}$.

\section{Coincidence and change of geometry} \label{sec:coin}
A demanding reader may ask two natural questions: 
i) how do the inner (local) Whitney averages $\cW_r$ 
behave under the outer (boundary-reaching) $\cA_q$ or $\cC_q$ averages?
ii) is our Definition \ref{defn:tentspaces} independent of the involved geometrical parameters?
Aiming at the question i), 
we will first investigate the relation between the classical scale $T^p_q$
and our scale $T^{p,r}_{q}$ with Whitney averages.
At the end of this section, we will also give an observation on the question ii).

Let us start with the following result.

\begin{obs}[Change of apertures] \label{obs:aperture}
Define for $0<q<\infty$ and $\alpha>0$ 
the following three $\alpha$-apertured functionals as
$$\cA^{\alpha}_{q}(g)(x):=\bigg(\iint_{\Gamma_\alpha(x)}|g(y,t)|^q
\frac{dydt}{t^{n+1}}\bigg)^{1/q},\,x\in\bRn,$$
$$\cN^{\alpha}(g)(x):=\sup_{(y,t)\in\Gamma_\alpha(x)}|g(y,t)|,\,x\in\bRn,$$
$$\cC^{\alpha}_{q}(g)(x):=\sup_{B\ni x}|B|^{-1/q}
\bigg(\iint_{\wh{B^\alpha}}|g(y,t)|^q\frac{dydt}{t}\bigg)^{1/q},\,x\in\bRn.$$
Similar to Definition \ref{defn:tentspaces}, 
these functionals can also result in a scale of tent spaces $^\alpha T^{p}_q$, 
where we let $^\alpha T^\infty_\infty=L^\infty$ for the trivial case $p=q=\infty$.
It is well known that 
we have the change of aperture equivalence $^\alpha T^{p}_q=T^{p}_q$, with
\begin{equation} \label{eqn:tentaper}
C(n,\alpha,p,q)\|g\|_{T^p_q}\leq\|g\|_{^\alpha T^{p}_q}\leq C'(n,\alpha,p,q)\|g\|_{T^p_q},
\,0<p,q\leq\infty. 
\end{equation}
For the proof, see \cite{FS} for the simple situation $0<p<q=\infty$. 
For the case $q=2$ and $0<p<\infty$ 
(hence for $0<p,q<\infty$ by taking the powers of $g$ properly),
see \cite{CMS} for a rough, and \cite{Tor} for a refined argument 
on estimating $C'$ when $\alpha>1$.
By using the atomic decomposition and the interpolation method, 
the sharp determination on both $C$ and $C'$ when $\alpha>0$, 
for the case $q=2$ and $0<p\leq\infty$,
is obtained recently in \cite{AusAngle}. 
Note that the methods of \cite{AusAngle} 
extend to the case $q=\infty$ under minor modifications.
We also remark that, the vector-valued approach in \cite{HTV} and \cite{HvNP} 
can deal with the change of apertures in a very simple manner in the Banach case, 
and then a convexification process takes care of the quasi-Banach case.
\end{obs}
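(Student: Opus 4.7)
My plan is to handle each of the four categories of Definition \ref{defn:tentspaces} separately, reducing the substantive work to the area functional case. The case $p=q=\infty$ is immediate since the definition does not involve any cones or tents. For the Carleson case $0<q<p=\infty$, the comparison $\mathcal{C}^\alpha_q(g)(x)\simeq \mathcal{C}_q(g)(x)$ follows pointwise from the geometric fact that the aperture-$\alpha$ tent $\wh{B^\alpha}$ and the standard tent $\wh{B}$ over any ball $B$ can be covered by finitely many tents of the other aperture over balls with radii differing by a factor depending only on $n$ and $\alpha$; a routine covering argument then gives \eqref{eqn:tentaper} in this range.

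For the non-tangential case $0<p<q=\infty$, I would invoke the classical Fefferman--Stein result from \cite{FS}: for every $\lambda>0$, one has the level-set inclusion
\[
\{\cN^{\alpha}(g)>\lambda\}\subset \bigl\{M\bigl(\mathbf{1}_{\{\cN(g)>\lambda\}}\bigr)>c(n,\alpha)\bigr\},
\]
with $M$ the Hardy--Littlewood maximal operator; the weak $(1,1)$ bound for $M$ then yields comparable distribution functions and hence comparable $L^p$ norms.

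The main obstacle is the area functional case $0<p,q<\infty$. The diagonal $p=q$ is a straightforward Fubini computation:
\[
\int_{\bRn}\cA^\alpha_q(g)(x)^q\,dx=\iint_{\bRt}|g(y,t)|^q\,\frac{|B(y,\alpha t)|}{t^{n+1}}\,dy\,dt=c_n\alpha^n\|g\|_{T^q_q}^q,
\]
with sharp constants. For general $p$, the direction $\|g\|_{T^p_q}\lesssim \|g\|_{{}^\alpha T^p_q}$ is trivial for $\alpha\geq1$. For the converse, I would prove the pointwise bound
\[
\cA^\alpha_q(g)(x)\lesssim_{n,\alpha,p_0}\bigl(M\bigl(\cA_q(g)^{p_0}\bigr)(x)\bigr)^{1/p_0}
\]
for any fixed $0<p_0<p$, using the following averaging: for $(y,t)\in\Gamma_\alpha(x)$ the set $\{x':(y,t)\in\Gamma(x')\}=B(y,t)$ has measure $c_n t^n$ and lies in $B(x,(1+\alpha)t)$, so rewriting the integral defining $\cA^\alpha_q(g)(x)^q$ as an average over $x'\in B(x,(1+\alpha)t)$ followed by H\"older's inequality produces the maximal function on the right. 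The Hardy--Littlewood maximal theorem then yields the desired $L^p$ estimate.

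Tracking the sharp dependence of the constants $C$ and $C'$ on $\alpha$ is delicate, and for this I would refer to Torchinsky's refinement in \cite{Tor} and to the atomic-decomposition plus interpolation argument of \cite{AusAngle}. An alternative, particularly clean in the Banach range, is the vector-valued reformulation of \cite{HTV,HvNP}, from which the quasi-Banach range is recovered via the convexification discussed in Section \ref{sec:defn}.
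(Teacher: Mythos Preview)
The paper does not itself prove Observation~\ref{obs:aperture}; it records the equivalence as well known and points to \cite{FS}, \cite{CMS}, \cite{Tor}, \cite{AusAngle}, \cite{HTV} and \cite{HvNP} for the various ranges. Your sketches for the case $p=q=\infty$, for the Carleson case (where in fact a single dilated ball suffices, since $\wh{B}\subset\wh{(\alpha B)^\alpha}$ when $\alpha>1$ and conversely when $\alpha<1$), and for the non-tangential case via the Fefferman--Stein level-set inclusion are correct and match the cited literature.

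The genuine gap is in your treatment of the conical case $0<p,q<\infty$: the pointwise inequality
\[
\cA^\alpha_q(g)(x)\lesssim_{n,\alpha,p_0}\bigl(M\bigl(\cA_q(g)^{p_0}\bigr)(x)\bigr)^{1/p_0}
\]
is \emph{false}, and the averaging you describe cannot yield it. When you insert $1=|B(y,t)|^{-1}\int_{B(y,t)}dx'$ into $\cA^\alpha_q(g)(x)^q$ and note $B(y,t)\subset B(x,(1+\alpha)t)$, the resulting ``average'' is over a ball whose radius $(1+\alpha)t$ varies with the integration variable $t$; there is no fixed ball from which to extract a Hardy--Littlewood maximal function of $\cA_q(g)^{p_0}$. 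For a concrete counterexample, take $\alpha>1$ and place disjointly supported bumps of $g$ near points $(y_k,t_k)$ with $t_k=2^{-k}$ and $y_k=x+(\alpha-\epsilon)t_k e_1$, each normalized so that its contribution to $\cA^\alpha_q(g)(x)^q$ equals $1$. Then $\cA^\alpha_q(g)(x)^q\approx K$ (the number of bumps), whereas $\cA_q(g)(x')$ is uniformly bounded since any $x'$ lies in at most $O_\alpha(1)$ of the balls $B(y_k,t_k)$; moreover those balls form a geometric sequence of radii along a ray from $x$, so every average of $\cA_q(g)^{p_0}$ over a ball centred at $x$ is $\lesssim(1+\alpha)^{-n}$, uniformly in $K$. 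Letting $K\to\infty$ disproves the pointwise bound for every $p_0>0$. The arguments in \cite{CMS}, \cite{Tor} and \cite{AusAngle} proceed instead by comparing the \emph{distribution functions} of $\cA^\alpha_q(g)$ and $\cA_q(g)$ (good-$\lambda$ or level-set estimates, or atomic decompositions), and that is what you must invoke here.
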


\begin{thm}\label{thm:coincidence}
We have the coincidence with equivalence of quasi-norms
$$T^{p,q}_{q,\beta}= T^p_{q,\beta},\,0<p,q\leq\infty,\,\beta\in\bR.$$
In particular, $T^{p,q}_{q}= T^p_{q}$, $0<p,q\leq\infty$,
showing that the classical tent spaces 
are included in the tent spaces with Whitney averages.
\end{thm}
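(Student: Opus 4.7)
The plan is to reduce to the unweighted case $\beta=0$ via the isometry of Remark \ref{rem:iso}, and then treat the four Categories A)--D) of Definition \ref{defn:tentspaces} with a common strategy: rewrite $\cW_q(f)^q$ as an integral over the Whitney box, apply Fubini to swap with the outer (cone, tent, or non-tangential) functional, and control the resulting geometric weight by indicators of cones or tents of adjusted apertures. The coincidence of norms will then follow from the change-of-aperture equivalence of Observation \ref{obs:aperture}.

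Since $(z,s)\in W(y,t)$ forces $s\simeq t$ and $|W(y,t)|\simeq t^{n+1}\simeq s^{n+1}$, the key reformulation for $q<\infty$ is
$$\cW_q(f)(y,t)^q\simeq\iint_{W(y,t)}|f(z,s)|^q\,\frac{dz\,ds}{s^{n+1}}.$$
In Case A) $0<p,q<\infty$, integrating against $dy\,dt/t^{n+1}$ over $\Gamma(x)$ and exchanging orders of integration yields
$$\cA_q(\cW_q f)(x)^q\simeq\iint_{\bRt}|f(z,s)|^q\,\frac{dz\,ds}{s^{n+1}}\,w(x,z,s),$$
where $w(x,z,s):=\iint_{\Gamma(x)\cap\wt{W}(z,s)}dy\,dt/t^{n+1}$ and $\wt{W}(z,s):=\{(y,t)\in\bRt\mid(z,s)\in W(y,t)\}$. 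A direct calculation gives $\int_{\wt{W}(z,s)}dy\,dt/t^{n+1}\simeq 1$ uniformly in $(z,s)$, and an elementary geometric argument produces apertures $\alpha_\pm$ depending only on $(\alpha_1,\alpha_2)$ such that
$$\chi_{\Gamma_{\alpha_-}(x)}(z,s)\lesssim w(x,z,s)\lesssim\chi_{\Gamma_{\alpha_+}(x)}(z,s).$$
Sandwiching therefore gives $\cA_q^{\alpha_-}(f)(x)\lesssim\cA_q(\cW_q f)(x)\lesssim\cA_q^{\alpha_+}(f)(x)$, and taking $L^p$-norms in $x$ and invoking Observation \ref{obs:aperture} closes this case. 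Case B) $0<q<p=\infty$ runs analogously with $\wh{B}$ in place of $\Gamma(x)$ and $dy\,dt/t$ in place of $dy\,dt/t^{n+1}$; the corresponding weight acquires a factor $\simeq s^n$, which is absorbed by the $s^{n+1}\to s$ adjustment of the integration density, and the sandwich by indicators of $\wh{B^{\alpha_\pm}}$ reduces again to change of aperture for $\cC_q$.

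Cases C) and D) are easier. In Case C) $0<p<q=\infty$, the trivial membership $(z,s)\in W(z,s)$ together with the inclusion $\bigcup_{(y,t)\in\Gamma(x)}W(y,t)\subseteq\Gamma_{(1+\alpha_1)\alpha_2}(x)$ gives $\cN(f)(x)\leq\cN(\cW_\infty f)(x)\leq\cN^{(1+\alpha_1)\alpha_2}(f)(x)$, after which Observation \ref{obs:aperture} finishes the job. Case D) $p=q=\infty$ is immediate from $\|\cW_\infty f\|_{L^\infty}=\|f\|_{L^\infty}$.

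The main technical obstacle is the aperture accounting for $w(x,z,s)$ in Case A), and in particular verifying that $\alpha_-$ may be chosen strictly positive. This is exactly where the consistency condition $0<\alpha_1<\alpha_2^{-1}<1$ gets used: a short geometric argument, intersecting the $t$-slab $(\alpha_2^{-1}s,\alpha_2 s)$ with the half-line $t>\alpha_-s/(1-\alpha_1)$ imposed by the triangle inequality, shows one may take any $\alpha_-<\alpha_2(1-\alpha_1)$, which is positive precisely by consistency. Beyond this bookkeeping, the argument reduces to Fubini plus Observation \ref{obs:aperture}.
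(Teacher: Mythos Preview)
Your proposal is correct and follows essentially the same route as the paper: reduce to $\beta=0$, use Fubini to rewrite $\cA_q(\cW_q f)(x)^q$ (resp.\ the Carleson analogue) as an integral of $|f(z,s)|^q$ against a geometric weight coming from integrating over $\wt{W}(z,s)=\{(y,t):(z,s)\in W(y,t)\}$, sandwich that weight between indicators of cones (resp.\ tents) with modified apertures, and finish via Observation~\ref{obs:aperture}. The paper organizes the same computation by first bounding $\wt{W}(z,s)$ between two explicit auxiliary Whitney boxes $W_*(z,s)\subset\wt{W}(z,s)\subset W_{**}(z,s)$ and then reading off the apertures $\alpha_0=\alpha_2^{-1}(1-\alpha_1)$ and $\alpha_C=(1+\alpha_1)\alpha_2$ from short triangle-inequality lemmas, but the underlying mechanism is identical; one cosmetic point is that your lower aperture bound $\alpha_-<\alpha_2(1-\alpha_1)$ only needs $\alpha_1<1$, not the full consistency chain, so ``precisely by consistency'' is a slight overstatement.
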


\begin{proof}
By Remark \ref{rem:iso}, it is enough to prove
$$T^{p,q}_{q}= T^p_{q},\,0<p,q\leq\infty.$$

We start with the following \textit{Whitney box geometry}: $\forall\,(z,s)\in\bRt$
$$W)\,\,\,\,W_*(z,s)\subset\{(y,t)| W(y,t)\ni(z,s)\}\subset W_{**}(z,s),$$
where $W_*$ and $W_{**}$ are the Whitney boxes associated to the Whitney parameters 
$(\alpha_1\alpha_2^{-1},\alpha_2)$ and $(\alpha_1\alpha_2,\alpha_2)$ 
respectively\footnotemark
\footnotetext{The pair of Whitney parameters defining $W_{**}$ is not necessarily consistent,
but for the purpose here, the consistency is not needed.}, 
and $(\alpha_1,\alpha_2)$ is the pair of Whitney parameters 
which defines $W$ and was used in Definition \ref{defn:tentspaces}.
We only need to verify the choices of $\alpha_1\alpha_2^{-1}$ and $\alpha_1\alpha_2$,
as the determination on $\alpha_2$ is straightforward.
To see the first inclusion in $W)$, given any $(y,t)\in W_*(z,s)$, we have 
$|z-y|<\alpha_1\alpha_2^{-1}s<\alpha_1 t$,
which implies $W(y,t)\ni(z,s)$. To see the second inclusion, 
given any $(y,t)$ with $W(y,t)\ni (z,s)$, we have
$|y-z|<\alpha_1t<\alpha_1\alpha_2s$, 
which implies $(y,t)\in W_{**}(z,s)$. This proves the Whitney box geometry $W)$.

For the \textit{cone geometry}, let $\alpha_{0}=\alpha_2^{-1}(1-\alpha_1)$. 
We have that: $\forall\,x\in\bRn$
$$C_1)\,\,\,\,\,(z,s)\in\Gamma_{\alpha_{0}}(x)
\,\,\text{and}\,\,(y,t)\in W_*(z,s)\Longrightarrow (y,t)\in \Gamma(x).$$
Indeed, we can compute as follow
$$|y-x|\leq |y-z|+|z-x|<\alpha_1\alpha_2^{-1}s+\alpha_2^{-1}(1-\alpha_1)s<t.$$
Let $\alpha_{C}=\alpha_2+\alpha_1\alpha_2$. There also holds: $\forall\,x\in\bRn$
$$C_2)\,\,\,\,\,(y,t)\in\Gamma(x)\,\,\text{and}\,\,
(z,s)\in W(y,t)\Longrightarrow (z,s)\in \Gamma_{\alpha_{C}}(x).$$
Indeed, we can compute as follow
$$|z-x|\leq |z-y|+|y-x|<\alpha_1t+t<(\alpha_2+\alpha_1\alpha_2)s.$$

Now from $W)+C_1)$, we have: $\forall\,x\in\bRn$
$$\chi_{\Gamma_{\alpha_{0}}(x)}(z,s)
\chi_{W_*(z,s)}(y,t)\leq \chi_{\Gamma(x)}(y,t)\chi_{W(y,t)}(z,s),$$
and from $W)+C_2)$, we have: $\forall\,x\in\bRn$
$$\chi_{\Gamma(x)}(y,t)\chi_{W(y,t)}(z,s)\leq
\chi_{\Gamma_{\alpha_{C}}(x)}(z,s)\chi_{W_{**}(z,s)}(y,t).$$
Then it follows from an integration in $(y,t)$ that: $\forall\,x\in\bRn$
$$\chi_{\Gamma_{\alpha_{0}}(x)}(z,s)\lesssim
\iint_{\bRt}\chi_{\Gamma(x)}(y,t)\frac{\chi_{W(y,t)}(z,s)}{t^{n+1}}dydt
\lesssim\chi_{\Gamma_{\alpha_{C}}(x)}(z,s),$$
where in dividing $s^{n+1}$, we use the similarity $s\simeq t$ implicitly.

If $0<q<\infty$, multiplying by $|f(z,s)|^q$ the above inequalities
and then integrating in $(z,s)$, we have from Fubini's theorem that
$$\cA^{\alpha_{0}}_q(f)(x)\lesssim\cA_q(\cW_q(f))(x)\lesssim
\cA^{\alpha_{C}}_q(f)(x),\,\forall\,x\in\bR^n.$$
If $q=\infty$, there holds a similar functional relation
$$\cN^{\alpha_{0}}(f)(x)\lesssim\cN(\cW_\infty(f))(x)\lesssim
\cN^{\alpha_{C}}(f)(x),\,\forall\,x\in\bR^n.$$
For $0<p<\infty$, taking an $L^p$ integration in $x$ in the above two functional relations 
and using the change of aperture equivalence in Observation \ref{obs:aperture}
lead us to the coincidence $T^{p,q}_{q}= T^p_q$ in Category $A)$ and Category $C)$. 

For the \textit{tent geometry}, let $\alpha_{T}=\alpha_2+\alpha_1\alpha_2^{-1}$.
We have that: $\forall\,B\subset\bRn$
$$T_1)\,\,\,\, (z,s)\in\wh{B^{\alpha_{T}}}\,\,
\text{and}\,\, (y,t)\in W_*(z,s)\Longrightarrow (y,t)\in \wh{B}.$$
Indeed, given $B\subset\bRn$, $(z,s)\in\wh{B^{\alpha_{T}}}$ and $(y,t)\in W_*(z,s)$,
then $B(z,\alpha_{T}s)\subset B$. Thus $$B(y,t)\subset B(z,t+|z-y|)
\subset B(z,t+\alpha_1\alpha_2^{-1}s)\subset B(z,\alpha_{T}s),$$ 
so $B(y,t)\subset B$.
Recall that $\alpha_{0}=\alpha_2^{-1}(1-\alpha_1)$. 
There also holds: $\forall\,B\subset\bRn$
$$T_2)\,\,\,\, (y,t)\in\wh{B}\,\,
\text{and}\,\, (z,s)\in W(y,t)\Longrightarrow (z,s)\in \wh{B^{\alpha_{0}}}.$$
Indeed, given $B\subset\bRn$, $(y,t)\in\wh{B}$ and $(z,s)\in W(y,t)$,
then $B(y,t)\subset B$. Thus $$B(z,\alpha_{0}s)\subset B(y,\alpha_{0}s+|z-y|)
\subset B(y,\alpha_{0}s+\alpha_1t)\subset B(y,t),$$ 
so $B(z,\alpha_{0}s)\subset B$ and $(z,s)\in \wh{B^{\alpha_{0}}}$.

Now from $W)+T_1)$, we have: $\forall\,B\subset\bRn$
$$\chi_{\wh{B^{\alpha_T}}}(z,s)\chi_{W_*(z,s)}(y,t)
\leq \chi_{\wh{B}}(y,t)\chi_{W(y,t)}(z,s),$$
and from $W)+T_2)$, we have: $\forall\,B\subset\bRn$
$$\chi_{\wh{B}}(y,t)\chi_{W(y,t)}(z,s)\leq 
\chi_{\wh{B^{\alpha_0}}}(z,s)\chi_{W_{**}(z,s)}(y,t).$$
Then it follows from an integration in $(y,t)$ that: $\forall\,B\subset \bRn$
$$ \chi_{\wh{B^{\alpha_T}}}(z,s)\lesssim\iint_{\bRt}
\chi_{\wh{B}}(y,t)\frac{\chi_{W(y,t)}(z,s)}{t^{n+1}}dydt
\lesssim\chi_{\wh{B^{\alpha_0}}}(z,s),$$
where in dividing $s^{n+1}$, we again use the similarity $s\simeq t$ implicitly.

If $0<q<\infty$, multiplying by $|f(z,s)|^q$ the above inequalities 
then integrating in $(z,s)$ and taking a supremum over $B\ni x$, 
we have from Fubini's theorem that
$$\cC^{\alpha_T}_q(f)(x)\lesssim\cC_q(\cW_q(f))(x)\lesssim\cC^{\alpha_0}_q(f)(x),
\,\forall\,x\in\bR^n.$$
Taking an $L^\infty$ norm in the above functional relation
and using the change of aperture equivalence in Observation \ref{obs:aperture}
lead us to the coincidence $T^{p,q}_{q}= T^p_q$ in Category $B)$. 

Together with the trivial Category $D)$, 
we can thus conclude the proof.
\end{proof}

\begin{rem}
For the coincidence with the ``classical'' tent spaces in Category $C)$, 
we mean in fact 
$T^{p,\infty}_{\infty}\cap C_{n.t.}= T^p_{\infty}\cap C_{n.t.}$, $0<p<\infty$.
\end{rem}

We end this section with another geometrical result, 
which will be needed in Section \ref{sec:proof} for the proof 
of $F_1)$ in Theorem \ref{thm:factorizationend}.

\begin{obs}[Change of Whitney parameters] \label{obs:changewhit}
Note that we have frozen two consistent parameters 
$(\alpha_1,\alpha_2)$ in Definition \ref{defn:tentspaces}.
Instead of considering different apertures as in Observation \ref{obs:aperture},
here we replace $(\alpha_1,\alpha_2)$ by another pair of consistent Whitney parameters 
$(\alpha'_1,\alpha'_2)$, with a prescribed chain condition 
$$0<\alpha_1<\alpha_1'<1/\alpha'_2<1/\alpha_2<1.$$ 
Following the way in Definition \ref{defn:tentspaces},
we can also define a scale of tent spaces associated to $(\alpha_1,\alpha_2)$.
Denoted by $^{(\alpha'_1,\alpha'_2)}T^{p,r}_{q}$, 
they should not be mistaken for the scale $^\alpha T^p_q$ in Observation \ref{obs:aperture}.
We have the change of Whitney parameters equivalence
\begin{equation} \label{eqn:changewhit}
C(\alpha_1,\alpha'_1,\alpha_2,\alpha'_2)\|f\|_{T^{p,r}_{q}}
\leq\|f\|_{^{(\alpha'_1,\alpha'_2)}T^{p,r}_{q}}
\leq C'(\alpha_1,\alpha'_1,\alpha_2,\alpha'_2)\|f\|_{T^{p,r}_{q}},
\end{equation}
where the constants $C$ and $C'$ also implicitly depend on $n$, $p$, $q$ and $r$.

The former part of this equivalence can be inspected from the chain condition satisfied by
$(\alpha_1,\alpha_2)$ and $(\alpha'_1,\alpha'_2)$. 
We prove the right hand inequality as follows.
For $(y,t)\in\bRt$, denote $\wt{W}(y,t)=B(y,\gamma_1t)\times(\gamma_2^{-1}t,\gamma_2t)$,
with $\gamma_1\geq\alpha'_1/\alpha_1$ and $\gamma_2\geq\alpha'_2/\alpha_2$.
Then one can find an integer $N=N(n,\alpha_1,\alpha_2,\alpha'_1,\alpha'_2)$ such that,
for any $(y,t)\in\bRt$, there exist $N$ points 
$\mathcal{P}_N(y,t)$ in $\wt{W}(y,t)$ with
$$\chi_{W'(y,t)}(z,s)\leq
\sum_{(\bar{y},\bar{t})\in \mathcal{P}_N(y,t)}\chi_{W(\bar{y},\bar{t})}(z,s),$$
where $W'$ is the Whitney average associated to the Whitney parameters $(\alpha'_1,\alpha'_2)$. 
Now using (\ref{eqn:tentaper}) in Observation \ref{obs:aperture}
and the geometries $\{W), C_1), C_2), T_1), T_2)\}$ in proving Theorem \ref{thm:coincidence},
there exists $\alpha=\alpha(\alpha_1,\alpha_2,\alpha'_1,\alpha'_2)$ such that
$$\|f\|_{^{(\alpha'_1,\alpha'_2)}T^{p,r}_{q}}
\lesssim \|f\|_{^\alpha T^{p,r}_{q}}\lesssim\|f\|_{T^{p,r}_{q}}.$$
We leave open the sharp determination on the bounds $C$ and $C'$ in (\ref{eqn:changewhit}).
\end{obs}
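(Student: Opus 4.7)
The plan is to treat the two halves of (\ref{eqn:changewhit}) separately. The left-hand inequality is immediate from the chain condition: $\alpha_1 < \alpha'_1$ and $\alpha_2 < \alpha'_2$ give $W(y,t) \subset W'(y,t)$ with $|W(y,t)| \simeq |W'(y,t)| \simeq t^{n+1}$, so $\cW_r(f)(y,t) \lesssim \cW'_r(f)(y,t)$ pointwise (where $\cW'_r$ denotes the Whitney average with parameters $(\alpha'_1, \alpha'_2)$), and monotonicity of the outer functionals $\cA_q, \cC_q, \cN$ (and of the $L^\infty$ norm in the trivial case) immediately yields $\|f\|_{T^{p,r}_q} \lesssim \|f\|_{^{(\alpha'_1, \alpha'_2)} T^{p,r}_q}$.

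For the reverse inequality the idea is to cover the larger Whitney box by finitely many smaller ones, in a way that is covariant under the translation-dilation action $(y,t) \mapsto (y + tv, ts)$. By compactness of $\overline{W'(0,1)}$, I would first fix once and for all a finite family $\{(v_i, s_i)\}_{i=1}^N \subset \wt{W}(0,1)$, where $\wt{W}$ is the enlarged Whitney box with parameters $(\gamma_1, \gamma_2)$ as in the observation, such that $W'(0,1) \subset \bigcup_{i=1}^N W(v_i, s_i)$. Applying the scaling to both sides yields $W'(y,t) \subset \bigcup_i W(y + tv_i, ts_i)$ for every $(y,t)\in\bRt$, and since $|W'(y,t)| \simeq |W(y+tv_i, ts_i)| \simeq t^{n+1}$, this gives the pointwise bound
\begin{equation*}
\cW'_r(f)(y,t)^q \lesssim \sum_{i=1}^N \cW_r(f)(y + tv_i, ts_i)^q
\end{equation*}
for any $0 < q < \infty$ (after the routine $\ell^r$-versus-$\ell^q$ comparison on $N$ fixed points, absorbed into the constant); for $q = \infty$, the sum is replaced by a max.

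Finally, I would push this pointwise estimate through each outer functional and close with the change-of-aperture equivalence (\ref{eqn:tentaper}). In Category $A)$, integrating over $\Gamma(x)$ and changing variables $(y,t) \mapsto (y', t') := (y + tv_i, ts_i)$ (Jacobian $s_i$, $t^{n+1} = (t'/s_i)^{n+1}$, and $\Gamma(x) \subset \Gamma_{\alpha_i}(x)$ with $\alpha_i = (1+|v_i|)/s_i$) gives $\cA_q(\cW'_r f)(x)^q \lesssim \sum_i \cA_q^{\alpha_i}(\cW_r f)(x)^q$; an $L^p$ norm followed by (\ref{eqn:tentaper}) concludes this case. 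Category $C)$ is an immediate variant via pointwise sup, and Category $D)$ is trivial. The principal obstacle I anticipate lies in Category $B)$: the affine map $\phi_i(y,t):=(y+tv_i, ts_i)$ does not preserve tents but only sends $\wh{B}$ into a slightly dilated-and-enlarged tent $\wh{(\lambda_i B)^{\alpha_i}}$, with $\lambda_i$ of size $\max(|v_i|, 1) + \alpha_i s_i$. This is resolved by absorbing the dilation $B \mapsto \lambda_i B$ into the supremum $\sup_{B \ni x}$ defining $\cC_q$ — at the cost of a harmless factor $\lambda_i^n$ — and then applying the Carleson-functional form of (\ref{eqn:tentaper}), in the same spirit as the geometries $T_1), T_2)$ used in the proof of Theorem \ref{thm:coincidence}.
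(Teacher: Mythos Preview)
Your proposal is correct and follows essentially the same route as the paper. Both arguments hinge on covering the larger Whitney box $W'(y,t)$ by finitely many translated--dilated copies $W(y+tv_i, ts_i)$ of the smaller box, then reducing to the change-of-aperture equivalence (\ref{eqn:tentaper}); your compactness-at-$(0,1)$-then-scale argument is simply the explicit construction of the points $\mathcal{P}_N(y,t)$ that the paper asserts exist, and your case-by-case change of variables in Categories $A)$--$D)$ is a concrete unpacking of what the paper compresses into the phrase ``using the geometries $\{W), C_1), C_2), T_1), T_2)\}$.''
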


\section{Multiplication and factorization} \label{sec:fac}
The main goal of this paper, is to obtain in the spirit of \cite{CV}, 
the corresponding multiplication and factorization results 
for our new scale of tent spaces $T^{p,r}_{q,\beta}$.
Some notations and definitions in function space theory are needed. 

Denote by $\Sigma$ the $\sigma$-finite measure space $(\Omega,\mu)$,
and by $L^0$ the collection of $\mu$-measurable complex-valued functions on $\Omega$.
A \textit{quasi-Banach function lattice} $X$ on $\Sigma$ is a non-empty subspace of $L^0$, 
which is equipped with a quasi-norm $\|\cdot\|_X$ such that, 
$(X,\|\cdot\|_X)$ is complete and $X$ satisfies the \textit{lattice property}:
$$\forall\, f\in X,\,\forall\, g\in L^0, \,\text{with}\,|g|\leq |f|\,\,\,\, \mu-\text{a.e.}$$
$$\Longrightarrow\,\,\,\,  g\in X, \,\text{with}\,\|g\|_X\leq \|f\|_X.$$

Clearly, for any $f$ in a quasi-Banach function lattice $X$, $\|f\|_X=\||f|\|_X$.

\begin{defn} \label{defn:mf}
Let $\{X_i\}_{i=0}^n$ be a collection of quasi-Banach function lattices on $\Sigma$.

M) By the multiplication: $X_0\leftarrow X_1\cdots X_n$, 
we mean that for any $f_i\in X_i$, $1\leq i\leq n$, we have $f_1\cdots f_n\in X_0$ and
$$\|f_1\cdots f_n\|_{X_0}\lesssim\|f_1\|_{X_1}\cdots\|f_n\|_{X_n},$$
where the implicit constant is independent of $f_1$, $\cdots$, $f_n$.

F) By the (strong) factorization: $X_0\rightarrow X_1\cdots X_n$, 
we mean that for any $f_0\in X_0$, there exist $f_i\in X_i$, $1\leq i\leq n$, such that
$|f_0|=|f_1|\cdots |f_n|$ and $$\|f_1\|_{X_1}\cdots\|f_n\|_{X_n}\lesssim\|f_0\|_{X_0},$$
where the implicit constant does not depend on $f_0$, $f_1$, $\cdots$, $f_n$.

When M) and F) are both satisfied, we write $X_0\leftrightarrow X_1\cdots X_n$.
\end{defn}

In this paper, our central task is to prove 

\begin{thm} \label{thm:factorizationend}
For any $0<p_0,q_0,r_0\leq\infty$, we have the following factorizations
$$F_1)\,\,T^{p_0,r_0}_{q_0}\rightarrow T^{p_0,\infty}_{q_0}\cdot T^{\infty,r_0}_\infty,$$
$$F_2)\,\,T^{p_0,r_0}_{q_0}\rightarrow T^{p_0,\infty}_{\infty}\cdot T^{\infty,r_0}_{q_0},$$
$$F_3)\,\,T^{p_0,r_0}_{q_0}\rightarrow T^{p_0,\infty}_{\infty}\cdot 
T^{\infty,\infty}_{q_0}\cdot T^{\infty,r_0}_{\infty}.$$
\end{thm}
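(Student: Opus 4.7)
The plan is to prove $F_1)$ directly via an enlarged Whitney average, then derive $F_2)$ by lifting the classical Coifman--Meyer--Stein-type factorization of $T^{p_0}_{q_0}$ (applied to $\cW_{r_0}(f_0)$) through a Whitney-regularization, and finally deduce $F_3)$ by composing $F_2)$ with $F_1)$.

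For $F_1)$, since by Observation \ref{obs:changewhit} the tent space norm is invariant under changes of consistent Whitney parameters, I would first pass to a pair for which there exist further consistent enlargements $(\alpha_1^*,\alpha_2^*)$ and $(\alpha_1^{**},\alpha_2^{**})$, sitting in a chain as in that observation and satisfying the geometric inclusions
$$W(y,t)\subset W^*(z,s)\subset W^{**}(y,t)\quad\text{whenever}\quad(z,s)\in W(y,t).$$
Then set $f_1(z,s):=\cW^*_{r_0}(f_0)(z,s)$ and $f_2(z,s):=f_0(z,s)/f_1(z,s)$ (with $f_2:=0$ where $f_1=0$). The left inclusion forces $f_1(z,s)\gtrsim\cW_{r_0}(f_0)(y,t)$ uniformly on $W(y,t)$; integrating $|f_2|^{r_0}$ over $W(y,t)$ then yields $\cW_{r_0}(f_2)(y,t)\lesssim 1$, so $f_2\in T^{\infty,r_0}_\infty$. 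The right inclusion yields $\cW_\infty(f_1)(y,t)\lesssim\cW^{**}_{r_0}(f_0)(y,t)$, and the change-of-Whitney-parameters equivalence brings the right-hand side back to the norm of $\cW_{r_0}(f_0)$ in $T^{p_0}_{q_0}$, placing $f_1$ in $T^{p_0,\infty}_{q_0}$ with the required control.

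For $F_2)$, I would start from the scalar factorization $\cW_{r_0}(f_0)=\varphi_1\varphi_2$ with $\varphi_1\in T^{p_0}_\infty$ and $\varphi_2\in T^\infty_{q_0}$, which in the $r=q$ regime is the classical Coifman--Meyer--Stein-type factorization and, via the coincidence $T^{p,q}_q=T^p_q$ of Theorem \ref{thm:coincidence}, is directly available in our scale. Since the classical construction can be arranged so that $\varphi_1$ is a non-tangential-maximal-type envelope (hence Whitney-regular up to a uniform multiplicative constant), setting $f_1:=\varphi_1$ and $f_2:=f_0\,\varphi_2/\cW_{r_0}(f_0)$ yields $\cW_\infty(f_1)\simeq\varphi_1$ and so $f_1\in T^{p_0,\infty}_\infty$. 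For $f_2$, the key move is to replace $\cW_{r_0}(f_0)$ throughout the construction by its enlarged analogue $\cW^*_{r_0}(f_0)$ from $F_1)$, which is itself Whitney-regular; then on each Whitney box $W(y,t)$ we may pull $\varphi_2(y,t)$ out of the $L^{r_0}$-integral, and cancellation against $\cW_{r_0}(f_0)$ gives $\cW_{r_0}(f_2)(y,t)\lesssim\varphi_2(y,t)$, whence $f_2\in T^{\infty,r_0}_{q_0}$. Finally, $F_3)$ follows by applying $F_1)$ (with outer parameter $p=\infty$) to the second factor produced by $F_2)$, splitting it as $T^{\infty,\infty}_{q_0}\cdot T^{\infty,r_0}_\infty$ and yielding the three-term decomposition.

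The main obstacle I foresee lies in the $F_2)$ step. The scalar factorization produces $\varphi_1,\varphi_2$ with no intrinsic Whitney regularity, and the quotient $f_0\,\varphi_2/\cW_{r_0}(f_0)$ is a priori unwieldy, because $\cW_{r_0}(f_0)(y,t)$ is only an $L^{r_0}$-average over $W(y,t)$ and may oscillate badly pointwise on the box. Making the transfer rigorous will require (a) arranging the classical factorization so that $\varphi_1$ is intrinsically comparable across each Whitney box, (b) substituting $\cW^*_{r_0}(f_0)$ for $\cW_{r_0}(f_0)$ in the denominator so it becomes Whitney-stable, and (c) tracking that every reparameterization is absorbed by the change-of-Whitney-parameters equivalence of Observation \ref{obs:changewhit}. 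Once these regularization steps are in place, the remaining estimates are of the same flavor as in $F_1)$.
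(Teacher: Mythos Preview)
Your proposal is correct and follows essentially the paper's route: for $F_1)$ the paper likewise sets the first factor equal to a re-parametrized Whitney average $\cW^*_{r_0}(u)$ (using \emph{shrunk} rather than enlarged boxes, which is equivalent via Observation~\ref{obs:changewhit}); for $F_2)$ it takes $v=P_0[\tilde u^{\tilde p}]^{1/\tilde p}$ with $\tilde u=\cA_{q_0}(\cW_{r_0}(u))$---exactly the Cohn--Verbitsky envelope you invoke---as the Whitney-regular first factor and $w=u/v$ as the second, controlling $\cW_{r_0}(w)$ by the lower Whitney bound on $v$ and finishing directly with the balayage Lemma~\ref{lem:bala} rather than quoting Cohn--Verbitsky as a black box; and $F_3)$ is obtained by composition (in the opposite order to yours, which is immaterial). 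One remark: the obstacle you anticipate in $F_2)$ is self-imposed, since $f_2=f_0\,\varphi_2/\cW_{r_0}(f_0)=f_0/\varphi_1$ and the putative oscillation of $\cW_{r_0}(f_0)$ cancels out entirely---only the Whitney-regularity of $\varphi_1$ is needed, so the $\cW^*_{r_0}$ substitution in the denominator is unnecessary.
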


The proof of this endpoint factorization theorem will be postponed to Section \ref{sec:proof}.
Meanwhile, there holds an endpoint multiplication result.

\begin{lem} \label{lem:multiplicationend}
For any $0<p_0,q_0,r_0\leq\infty$, we have the following multiplications
$$M_1)\,\,T^{p_0}_{q_0}\leftarrow T^{p_0}_{\infty}\cdot T^{\infty}_{q_0},$$
$$M_2)\,\,T^{p_0,r_0}_{q_0}\leftarrow T^{p_0,\infty}_{\infty}\cdot 
T^{\infty,\infty}_{q_0}\cdot T^{\infty,r_0}_\infty.$$
\end{lem}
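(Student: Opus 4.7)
The plan is to dispatch $M_1)$ first by a classical Coifman-Meyer-Stein style tent-set decomposition, and then to obtain $M_2)$ as a fairly immediate consequence via a pointwise H\"older inequality on the Whitney averages.

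For $M_1)$, I would first handle the trivial boundary cases: when $p_0=\infty$ the claim reduces to $L^\infty\cdot T^\infty_{q_0}\subset T^\infty_{q_0}$, and when $q_0=\infty$ it reduces to $T^{p_0}_\infty\cdot L^\infty\subset T^{p_0}_\infty$; both follow at once from the lattice property. For the remaining main case $0<p_0,q_0<\infty$, I would normalize $\|g\|_{T^\infty_{q_0}}=1$ and aim at $\int\cA_{q_0}(fg)^{p_0}\,dx\lesssim\int\cN(f)^{p_0}\,dx$. The geometric heart is the observation that, for the open level set $E_\lambda=\{\cN(f)>\lambda\}$, any $(y,t)\notin\wh{E_\lambda}$ satisfies $|f(y,t)|\leq\lambda$: indeed $B(y,t)\not\subset E_\lambda$ produces a point $x\in B(y,t)$ with $\cN(f)(x)\leq\lambda$ and $(y,t)\in\Gamma(x)$, forcing $|f(y,t)|\leq\lambda$. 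Combining this with a Whitney decomposition $\{B_j\}$ of $E_\lambda$, whose dilated tents $\wh{cB_j}$ cover $\wh{E_\lambda}$, together with the normalized Carleson bound $\iint_{\wh{cB_j}}|g|^{q_0}\,dy\,dt/t\lesssim|B_j|$, a layer-cake integration in $\lambda$ and a distribution-function argument in $x$ close the estimate.

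For $M_2)$, I would exploit the elementary H\"older inequality on a Whitney box $W=W(y,t)$,
\[
\|u_1u_2u_3\|_{L^{r_0}(W)}\leq\|u_1\|_{L^\infty(W)}\|u_2\|_{L^\infty(W)}\|u_3\|_{L^{r_0}(W)},
\]
to deduce the pointwise bound
\[
\cW_{r_0}(f_1f_2f_3)(y,t)\leq\cW_\infty(f_1)(y,t)\cdot\cW_\infty(f_2)(y,t)\cdot\cW_{r_0}(f_3)(y,t).
\]
By hypothesis, $\cW_\infty(f_1)\in T^{p_0}_\infty$, $\cW_\infty(f_2)\in T^\infty_{q_0}$, and $\cW_{r_0}(f_3)\in T^\infty_\infty=L^\infty$. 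Applying $M_1)$ to the first two factors places their product in $T^{p_0}_{q_0}$, after which the lattice property of $T^{p_0}_{q_0}$ allows multiplication by the bounded $\cW_{r_0}(f_3)$; by the definition of $T^{p_0,r_0}_{q_0}$ in terms of Whitney averages (see Remark \ref{rem:iso}), this gives $f_1f_2f_3\in T^{p_0,r_0}_{q_0}$ with the desired quasi-norm bound.

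The main technical obstacle is confined to the main case of $M_1)$ in the quasi-Banach regime $p_0<1$ or $q_0<1$: the tent/Whitney decomposition argument is standard for $p_0,q_0\geq1$, but one must use the $q_0$-th power version of the triangle inequality for $\cA_{q_0}$ and run the distribution-function layer-cake at the level of $p_0$-powers, tracking geometric constants through the change-of-aperture equivalence of Observation \ref{obs:aperture}. Both adaptations are routine but require bookkeeping.
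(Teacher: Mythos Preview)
Your proposal is correct and follows essentially the same route as the paper: for $M_2)$ your pointwise H\"older bound $\cW_{r_0}(f_1f_2f_3)\le \cW_\infty(f_1)\cW_\infty(f_2)\cW_{r_0}(f_3)$ followed by $M_1)$ and the lattice property is exactly the paper's argument, and for $M_1)$ the paper simply cites \cite[Lemma 2.1]{CV} (whose proof rests on the Carleson embedding inequality), while your level-set/Whitney-cover sketch is precisely an unpacking of that cited argument. The only cosmetic difference is that the paper treats the endpoint cases $\max(p_0,q_0)=\infty$ in one line rather than two, and defers the main case entirely to the reference rather than outlining the distribution-function computation.
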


\begin{proof}[Proof of Lemma \ref{lem:multiplicationend}]
If $\max(p_0,q_0)=\infty$, there is nothing to prove for $M_1)$. If $\max(p_0,q_0)<\infty$, 
then the multiplication $M_1)$ is essentially in \cite[Lemma 2.1]{CV}.
The multiplication $M_2)$ is a consequence of H\"older's inequality and $M_1)$. 
In fact, we have
\begin{align*}
 \|fgh\|_{T^{p_0,r_0}_{q_0}}&\leq\|\cW_\infty(f)\cW_\infty(g)\cW_{r_0}(h)\|_{T^{p_0}_{q_0}}\\
&\lesssim \|\cW_\infty(f)\|_{T^{p_0}_{\infty}}
\|\cW_\infty(g)\|_{T^{\infty}_{q_0}}\|\cW_{r_0}(h)\|_{T^{\infty}_{\infty}}\\&=
\|f\|_{T^{p_0,\infty}_{\infty}}\|g\|_{T^{\infty,\infty}_{q_0}}\|h\|_{T^{\infty,r_0}_{\infty}},
\end{align*}
where $f$, $g$ and $h$ are all measurable functions on $\bRt$.

Note that for $M_1)$, the starting point of \cite[Lemma 2.1]{CV} 
is the following inequality for Carleson measures (see \cite[p. 58--61]{Stein} for example)
$$\iint_{\bRt}|f(y,t)|^p|d\mu|(y,t)\lesssim 
\|f\|_{T^p_\infty}^p\sup_{B\subset\bRn}\frac{|\mu|(\wh{B})}{|B|},$$
which holds true for any Borel measure $d\mu$ on $\bRt$ and 
any measurable $f$ such that $\cN(f)\in L^p$, $0<p<\infty$.
This is also why we define the Category $C)$ tent spaces $T^{p}_\infty$ 
without restricting them in the class $C_{n.t.}(\bRt;\bC)$.
\end{proof}

For $0<p_1,p_2\leq\infty$, define the \textit{H\"olderian triplet} 
$(p_1,p_2,(p_1,p_2)_H)$ by the relation $(p_1,p_2)_H^{-1}=p_1^{-1}+p_2^{-1}$, 
where as usual, we will admit $1/\infty=0$.

Combining $F_3)$ in Theorem \ref{thm:factorizationend} 
and $M_2)$ in Lemma \ref{lem:multiplicationend}, 
we can deduce an intermediate claim where the H\"olderian triplets enter. 

\begin{thm}\label{thm:factorization}
Suppose for $i\in\{0, 1, 2\}$, $T^{p_i,r_i}_{q_i,\beta_i}$ lies in the scale 
of weighted tent spaces with Whitney averages in Definition \ref{defn:tentspaces}.
Assume the H\"olderian relation $(H)$: 
$$p_0=(p_1,p_2)_H, \,\,q_0=(q_1,q_2)_H, \,\,r_0=(r_1,r_2)_H
\,\,\text{and}\,\, \beta_0=\beta_1+\beta_2.$$
Then we have the multiplication and factorization
$$T^{p_0,r_0}_{q_0,\beta_0}\leftrightarrow T^{p_1,r_1}_{q_1,\beta_1}
\cdot T^{p_2,r_2}_{q_2,\beta_2}.$$
\end{thm}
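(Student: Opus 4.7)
The plan is to combine the endpoint factorization $F_3)$ of Theorem \ref{thm:factorizationend} with the endpoint multiplication $M_2)$ of Lemma \ref{lem:multiplicationend}, interposing three elementary H\"older-type inequalities, one at each of the endpoint spaces $T^{p,\infty}_\infty$, $T^{\infty,\infty}_q$, $T^{\infty,r}_\infty$. First, Remark \ref{rem:iso} together with $\beta_0=\beta_1+\beta_2$ gives an isometric reduction to the unweighted case $\beta_0=\beta_1=\beta_2=0$, since $f\mapsto s^{-\beta_i}f$ is an isometry $T^{p_i,r_i}_{q_i,\beta_i}\to T^{p_i,r_i}_{q_i}$ and $(s^{-\beta_1}f_1)(s^{-\beta_2}f_2)=s^{-\beta_0}f_1f_2$. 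Since Definition \ref{defn:mf} sees only $|f_0|=|f_1||f_2|$, I further take all factors nonnegative.

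\textbf{Endpoint H\"older lemmas and power identities.} Under the H\"olderian relation $(H)$, I record the three ``coordinate'' H\"older inequalities
\[
\|uv\|_{T^{p_0,\infty}_\infty}\le\|u\|_{T^{p_1,\infty}_\infty}\|v\|_{T^{p_2,\infty}_\infty},\quad \|uv\|_{T^{\infty,\infty}_{q_0}}\le\|u\|_{T^{\infty,\infty}_{q_1}}\|v\|_{T^{\infty,\infty}_{q_2}},
\]
\[
\|uv\|_{T^{\infty,r_0}_\infty}\le\|u\|_{T^{\infty,r_1}_\infty}\|v\|_{T^{\infty,r_2}_\infty},
\]
which follow respectively from the pointwise bound $\cN(uv)\le\cN(u)\cN(v)$ plus $L^p$-H\"older, from H\"older in $L^{q_i}(\wh{B},dydt/t)$ with $|B|^{-1/q_0}=|B|^{-1/q_1}|B|^{-1/q_2}$, and from H\"older in $L^{r_i}(W(y,t),dzds)$ with $|W|^{-1/r_0}=|W|^{-1/r_1}|W|^{-1/r_2}$. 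Dually, for $\theta\in(0,1]$ and $u\ge 0$ one has the power identities $\cN(u^\theta)=\cN(u)^\theta$, $\cC_{q/\theta}(u^\theta)=\cC_q(u)^\theta$, $\cW_{r/\theta}(u^\theta)=\cW_r(u)^\theta$, which produce an equi-normed pointwise factorization of each endpoint space across the H\"older triplet.

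\textbf{Factorization and multiplication.} For the factorization $F$, given $f_0\in T^{p_0,r_0}_{q_0}$, apply $F_3)$ to write $|f_0|=g_0h_0k_0$ with $\|g_0\|_{T^{p_0,\infty}_\infty}\|h_0\|_{T^{\infty,\infty}_{q_0}}\|k_0\|_{T^{\infty,r_0}_\infty}\lesssim\|f_0\|_{T^{p_0,r_0}_{q_0}}$, then split pointwise via the power identities: set $g_i:=g_0^{p_0/p_i}$, $h_i:=h_0^{q_0/q_i}$, $k_i:=k_0^{r_0/r_i}$ for $i=1,2$, with the convention $x^0\equiv 1$ when an index is $\infty$. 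Each $g_i,h_i,k_i$ then lies in the $(p_i,q_i,r_i)$-indexed endpoint space with norm equal to the corresponding power of the $(p_0,q_0,r_0)$-norm, and $M_2)$ gives $f_i:=g_ih_ik_i\in T^{p_i,r_i}_{q_i}$; since $|f_0|=f_1f_2$ and the exponents sum to $1$ via $(H)$, multiplying over $i=1,2$ yields $\|f_1\|\|f_2\|\lesssim\|f_0\|$. For the multiplication $M$, given $f_1,f_2$, apply $F_3)$ to each to obtain $|f_i|=g_ih_ik_i$, regroup as $|f_1f_2|=(g_1g_2)(h_1h_2)(k_1k_2)$, push each bracket into its endpoint space at the indices $(p_0,q_0,r_0)$ by the H\"older lemmas above, and conclude via $M_2)$. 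The essential content is entirely in Theorem \ref{thm:factorizationend} and Lemma \ref{lem:multiplicationend}; the main obstacle I anticipate here is merely the case analysis when some of $p_i,q_i,r_i$ equals $\infty$, so that the power exponent $p_0/p_i$ degenerates to $0$ and the H\"older inequality collapses to a trivial $L^\infty$ bound --- notationally painful but mathematically routine.
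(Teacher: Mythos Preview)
Your proposal is correct and follows essentially the same route as the paper: reduce to $\beta_i=0$, use $F_3)$ to pass to the three ``extremal'' endpoint spaces, handle those trivially by H\"older and by taking powers $|f|=|f|^{1-\theta}|f|^\theta$, and reassemble via $M_2)$. The paper phrases the middle step more abstractly (``extremal tent spaces have trivial multiplication and factorization'') while you spell out the pointwise identities $\cN(u^\theta)=\cN(u)^\theta$, $\cC_{q/\theta}(u^\theta)=\cC_q(u)^\theta$, $\cW_{r/\theta}(u^\theta)=\cW_r(u)^\theta$ explicitly, but the content is the same.
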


\begin{proof}[Proof of Theorem \ref{thm:factorization}]
By Remark \ref{rem:iso} and Definition \ref{defn:mf}, 
it is enough to assume $\beta_i=0$, $i\in\{0, 1, 2\}$. 
Thus, we are only meant to show
$$T^{p_0,r_0}_{q_0}\leftrightarrow T^{p_1,r_1}_{q_1}
\cdot T^{p_2,r_2}_{q_2}.$$

Call \textit{extremal tent spaces} those $T^{p,r}_q$ 
with at least two among $p,q,r$ equal to $\infty$.
Therefore, $T^{p_0,r_0}_{q_0}\leftrightarrow T^{p_1,r_1}_{q_1}\cdot T^{p_2,r_2}_{q_2}$
holds trivially if $T^{p_0,r_0}_{q_0}$ is an extremal tent space.
Indeed, multiplication is just a consequence of H\"older's inequality,
and factorization follows from the trick of taking powers: $|f|=|f|^{1-\theta}|f|^\theta$, 
with $0\leq\theta\leq 1$. 

Now the general factorization can be proved as follows.
With the H\"olderian relation $(H)$ in mind, factorizing $T^{p_0,r_0}_{q_0}$ 
through $F_3)$ in Theorem \ref{thm:factorizationend} into extremal tent spaces, 
using the known factorization for extremal tent spaces, 
and multiplying through $M_2)$ in Lemma \ref{lem:multiplicationend},
we then have
\begin{align*}
 T^{p_0,r_0}_{q_0}
&\rightarrow T^{p_0,\infty}_{\infty}\cdot 
T^{\infty,\infty}_{q_0} \cdot T^{\infty,r_0}_{\infty}\\
&\rightarrow T^{p_1,\infty}_{\infty}\cdot 
T^{p_2,\infty}_{\infty} \cdot T^{\infty,\infty}_{q_1} 
\cdot T^{\infty,\infty}_{q_2} \cdot T^{\infty,r_1}_{\infty} \cdot T^{\infty,r_2}_{\infty}
\rightarrow T^{p_1,r_1}_{q_1}\cdot T^{p_2,r_2}_{q_2}.
\end{align*}

Finally, the general multiplication can be proved as follows.
With the H\"olderian relation $(H)$ in mind, factorizing $T^{p_i,r_i}_{q_i}(i=1,2)$ 
through $F_3)$ in Theorem \ref{thm:factorizationend} into extremal tent spaces, 
using the known multiplication for extremal tent spaces, 
and multiplying through $M_2)$ in Lemma \ref{lem:multiplicationend},
we then have
\begin{align*}
 T^{p_1,r_1}_{q_1}\cdot T^{p_2,r_2}_{q_2}
&\rightarrow T^{p_1,\infty}_{\infty} 
\cdot T^{\infty,\infty}_{q_1} \cdot T^{\infty,r_1}_{\infty}
\cdot T^{p_2,\infty}_{\infty}\cdot T^{\infty,\infty}_{q_2} \cdot T^{\infty,r_2}_{\infty}\\
&\rightarrow T^{p_0,\infty}_{\infty}\cdot 
T^{\infty,\infty}_{q_0} \cdot T^{\infty,r_0}_{\infty}
\rightarrow T^{p_0,r_0}_{q_0}.
\end{align*}

The quasi-norm inequalities in each proof can be obtained by inspection.
\end{proof}

\section{quasi-Banach complex interpolation} \label{sec:inter}
We begin with a second look at the symbol ``$\leftrightarrow$'' 
for multiplication and factorization,
which we formulated in last section in Definition \ref{defn:mf}.

\begin{defn}
Given two quasi-Banach function lattices $X_1$ and $X_2$, 
we define their \textit{Calder\'{o}n's product} $X_1\bullet X_2$ 
as the class of $u\in L^0$ for which 
$$\|u\|_{X_1\bullet X_2}:=\inf\{\|v\|_{X_1}\|w\|_{X_2}\mid\,
|u|=|v||w|,\,v\in X_1,\,w\in X_2\}<\infty.$$
\end{defn}

Clearly, the usual product
$X_1\cdot X_2=\{vw\mid v\in X_1,\,w\in X_2\}$ 
is contained in the Calder\'{o}n's product $X_1\bullet X_2$.
In other words, $X_1\bullet X_2$ is the completion of $X_1\cdot X_2$
under the quasi-norm $\|\cdot\|_{X_1\bullet X_2}$.
Moreover, $X_0\leftrightarrow X_1\cdot X_2$ amounts to say $X_0=X_1\bullet X_2$,
where we interpret the equality by the equivalence of quasi-norms.

This new product $X_1\bullet X_2$, was first used 
by Calder\'{o}n in \cite{C} as an intermediate space 
for the complex interpolation of a couple of Banach function lattices $(X_1,X_2)$. 
For the underlying measure space $\Sigma=(\Omega,\mu)$, assume that
$\Omega$ is a complete separable metric space, 
and $\mu$ is a $\sigma$-finite Borel measure on $\Omega$. 
In a (most) natural extension of Calder\'{o}n's interpolation method to the quasi-Banach setting,
Kalton and Mitrea establish in \cite[Section 3]{KM} (see also \cite{K1}) that, 
for a couple of analytically convex separable 
quasi-Banach function lattices $(X_1,X_2)$ on $\Sigma$,  
there holds the generalized 
\textit{Calder\'{o}n's product formula} (see \cite[Theorem 3.4]{KM}) that
$$(X_1,X_2)_{\theta}=[X_1]^{1-\theta}\bullet [X_2]^\theta,\,0<\theta<1.$$ 
Here, $X$ analytically convex (A-convex for short) means that,
for any analytic\footnotemark\footnotetext{See \cite[p. 3911]{KM} 
for the precise definitions of analyticity and A-convexity.} 
function $\Phi:S=\{z\in \bC\mid\,\text{Re}\, z\in(0,1)\}\rightarrow X$, 
which is also continuous to the closed strip $\overline{S}=S\cup\pd S$, 
we have the \textit{maximum modulus principle}
$$\max_{z\in S}\|\Phi(z)\|_X \lesssim \max_{z\in \pd S} \|\Phi(z)\|_X.$$
Under this A-convexity requirement, $X_1+X_2$ is also A-convex,
and then Calder\'on's method adapts to the quasi-Banach case.
In the same spirit,
this analytical approach to the interpolation of quasi-Banach function lattices
was also considered in \cite{BeCe}, 
where the ambient A-convex space is not necessarily the usual $X_1+X_2$.
 
It was obtained in \cite{K} that
$X$ analytically convex is equivalent to $X$ $r$-convex for some $r>0$.
Here, $X$ \textit{(lattice) $r$-convex} means that, for any $n\in\bN_+$
and any $f_i\in X$, $i=1,\dots,n$, we have the inequality
$$\bigg\|\bigg(\sum_{i=1}^n|f_i|^r\bigg)^{1/r}
\bigg\|_X\leq \bigg(\sum_{i=1}^n\|f_i\|_X^r\bigg)^{1/r}.$$
This convexification/normalization process is trivial for Banach function lattice $X$,
as we can always take $r=1$ in the above inequality.
Thus for our purpose here, we can change A-convex to $r$-convex.

Now we turn to the separability issue. 
Recall that a Banach function lattice $X$ 
is said to satisfy the \textit{Fatou property} \cite[Remark 2 on p. 30]{LT}, 
or \textit{maximality in $L^0$}, if 
$$\forall\,0\leq f_n\in X\,\text{and}\,\sup_{n\in\bN_+}\|f_n\|_X<\infty,\, 
\text{with}\,f_n\uparrow f\in L^0\,\,\,\mu-\text{a.e.}$$
$$\Longrightarrow f\in X\,\text{and}\,\|f\|_X=\lim_{n\rightarrow\infty}\|f_n\|_X.$$
It was observed\footnotemark\footnotetext{In this regard, 
see also the second remark following Theorem 7.9 of \cite{KMM}, 
where $X_1$ and $X_2$ are assumed to be sequence spaces. In fact,
only the Fatou property is needed in the arguments there.} in \cite{K1} that, 
if both $X_1$ and $X_2$ satisfy the Fatou property,
we only need to assume for the interpolation that either $X_1$ or $X_2$ is separable.

For further information on the applicability of Calder\'{o}n's product formula,
see \cite[Section 3]{KM} and \cite[Section 7]{KMM} directly. 
\textit{Therefore, for two quasi-Banach function lattices $X_1$ and $X_2$,
if $X_i\,(i=1,2)$ is $r_i$-convex and has the Fatou property, 
and if either $X_1$ or $X_2$ is separable, 
then we have the desired interpolation realization:}
$$(X_1,X_2)_{\theta}=[X_1]^{1-\theta}\bullet [X_2]^\theta, \,0<\theta<1.$$

Let us apply these to tent spaces. 

\begin{lem} \label{lem:Fatou}
All the tent spaces $T^{p,r}_{q,\beta}$ have the Fatou property.
\end{lem}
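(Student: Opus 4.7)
The plan is to show that every layer of the definition of $\|\cdot\|_{T^{p,r}_{q,\beta}}$ is monotone continuous along pointwise increasing sequences, so that a single application of the monotone convergence theorem (MCT) at each stage delivers the Fatou property. By Remark \ref{rem:iso}, the map $f \mapsto f(z,s)s^{-\beta}$ is an isometric isomorphism from $T^{p,r}_{q,\beta}$ onto $T^{p,r}_q$ and (being multiplication by a positive function) preserves pointwise a.e.\ monotone limits, so it suffices to treat the unweighted scale $T^{p,r}_q$.

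Fix $0 \le f_n \in T^{p,r}_q$ with $f_n \uparrow f$ a.e.\ on $\bRt$ and $M := \sup_n \|f_n\|_{T^{p,r}_q} < \infty$. The first step is to verify that $\cW_r(f_n) \uparrow \cW_r(f)$ pointwise on $\bRt$. For $r < \infty$ this is immediate from MCT applied to $\int_{W(y,t)} f_n^r\, dz\,ds$. For $r = \infty$ it is an elementary observation: for each Whitney box $W$, $\esssup_W f_n \le \esssup_W f$, and conversely any $C \ge \sup_n \esssup_W f_n$ is an a.e.\ upper bound for the pointwise limit $f$ on $W$, giving the reverse inequality.

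Setting $g_n := \cW_r(f_n)$ and $g := \cW_r(f)$, so that $0 \le g_n \uparrow g$ pointwise, the problem reduces to the Fatou property of the classical scale $T^p_q$. We run through the four categories of Definition \ref{defn:tentspaces}. In Category $A)$, MCT applied to the conical integral gives $\cA_q(g_n) \uparrow \cA_q(g)$ pointwise, and then $L^p$-MCT yields $\|\cA_q(g)\|_{L^p} = \lim_n \|\cA_q(g_n)\|_{L^p}$. In Category $C)$, $\cN(g_n) \uparrow \cN(g)$ pointwise, since the supremum of an increasing sequence of nonnegative functions over a fixed cone equals the supremum of the limit, followed again by $L^p$-MCT. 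Category $D)$ is trivial, as $T^\infty_\infty = L^\infty$ plainly has the Fatou property. For Category $B)$, let $F_B(h) := |B|^{-1/q}\bigl(\iint_{\wh{B}} h^q\,dydt/t\bigr)^{1/q}$; MCT gives $F_B(g_n) \uparrow F_B(g)$ for each ball $B$, and two successive interchanges of nested suprema of nonnegative quantities produce $\|\cC_q(g)\|_{L^\infty} = \sup_x \sup_{B \ni x} \sup_n F_B(g_n) = \sup_n \|\cC_q(g_n)\|_{L^\infty}$.

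In every case $\lim_n \|f_n\|_{T^{p,r}_q} = M < \infty$, so $f \in T^{p,r}_q$ with $\|f\|_{T^{p,r}_q} = M$, as required. No step is genuinely hard; the only point requiring care is to check that each of the nested operations — essential supremum over a Whitney box, $L^q$ integration over a cone or tent, supremum over cones or over balls, and the final $L^p$ or $L^\infty$ norm on the boundary — commutes with pointwise increasing limits, which in each case reduces either to MCT or to the elementary interchange of nested suprema of nonnegative quantities.
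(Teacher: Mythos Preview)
Your proof is correct and follows exactly the route the paper intends: the paper's own argument is the one-line ``This is an easy consequence of the monotone convergence theorem and simple measure theoretic arguments,'' and you have simply spelled out those arguments layer by layer (Whitney average, then conical/Carleson/non-tangential functional, then the boundary $L^p$ norm). There is no substantive difference in approach.
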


\begin{proof}
This is an easy consequence of the monotone convergence theorem
and simple measure theoretic arguments.
\end{proof}

For $0<p_1,p_2\leq\infty$ and $\theta\in(0,1)$, 
define the \textit{$\theta$-H\"olderian triplet} 
$(p_1,p_2,(p_1,p_2)_\theta)$ by the relation 
$(p_1,p_2)_\theta^{-1}=(1-\theta)/p_1+\theta/p_2$, 
where we again admit $1/\infty=0$.

\begin{thm} \label{thm:Interpolation}
Let $0<\theta<1$. Suppose for $i\in\{0, 1, 2\}$, 
$T^{p_i,r_i}_{q_i,\beta_i}$ lies in the scale of weighted tent spaces 
with Whitney averages in Definition \ref{defn:tentspaces}.
Assume $\min(p_1,p_2)<\infty$ and the $\theta$-H\"olderian relation $(H)_\theta:$
$$p_0=(p_1,p_2)_\theta, \,\,q_0=(q_1,q_2)_\theta, \,\,r_0=(r_1,r_2)_\theta
\,\,\text{and}\,\, \beta_0=(1-\theta)\beta_1+\theta\beta_2.$$
Then under the Kalton-Mitrea complex interpolation method, we have
$$(T^{p_1,r_1}_{q_1,\beta_1},T^{p_2,r_2}_{q_2,\beta_2})_{\theta}
=T^{p_0,r_0}_{q_0,\beta_0}.$$
\end{thm}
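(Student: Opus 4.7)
The plan is to combine three ingredients that are already in place: the Kalton--Mitrea version of Calder\'on's product formula recalled just before Lemma \ref{lem:Fatou}, the power/convexification realization $[T^{p,r}_{q,\beta}]^\theta = T^{p/\theta,r/\theta}_{q/\theta,\beta\theta}$ noted at the end of Section \ref{sec:defn}, and the multiplication/factorization identity of Theorem \ref{thm:factorization}, which already equates a suitable Calder\'on-type product of tent spaces with a single tent space. So the whole argument reduces to verifying that the Kalton--Mitrea hypotheses apply to our pair and then matching parameters.

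First I would check the structural hypotheses needed for $(X_1,X_2)_\theta = [X_1]^{1-\theta}\bullet[X_2]^\theta$ on the pair $X_i = T^{p_i,r_i}_{q_i,\beta_i}$, $i=1,2$. The Fatou property is exactly Lemma \ref{lem:Fatou}. For analytic convexity, set $\tau_i=\min(p_i,q_i,r_i)$: the convexification realization yields $[T^{p_i,r_i}_{q_i,\beta_i}]^{\tau_i}=T^{p_i/\tau_i,r_i/\tau_i}_{q_i/\tau_i,\beta_i\tau_i}$, which is Banach since all its indices are $\geq 1$, so $T^{p_i,r_i}_{q_i,\beta_i}$ is $\tau_i$-convex, hence A-convex by Kalton's result quoted in the excerpt. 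Finally, the hypothesis $\min(p_1,p_2)<\infty$ together with the density statement for $E$ at the end of Section \ref{sec:defn} guarantees that at least one of the two endpoint spaces is separable, which is all that is needed in the Kalton--Mitrea framework since both spaces have the Fatou property.

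With the hypotheses verified, the Kalton--Mitrea formula gives
\[
(T^{p_1,r_1}_{q_1,\beta_1},T^{p_2,r_2}_{q_2,\beta_2})_\theta
= [T^{p_1,r_1}_{q_1,\beta_1}]^{1-\theta}\bullet[T^{p_2,r_2}_{q_2,\beta_2}]^\theta
= T^{p_1/(1-\theta),r_1/(1-\theta)}_{q_1/(1-\theta),\beta_1(1-\theta)}
\bullet T^{p_2/\theta,r_2/\theta}_{q_2/\theta,\beta_2\theta},
\]
where the second equality uses the power realization. The two factors on the right satisfy the unweighted H\"olderian relation $(H)$ from Theorem \ref{thm:factorization} with target $T^{p_0,r_0}_{q_0,\beta_0}$: the $\theta$-H\"olderian relation $(H)_\theta$ rewrites as
\[
\frac{1-\theta}{p_1}+\frac{\theta}{p_2}=\frac{1}{p_0},\qquad
\frac{1-\theta}{q_1}+\frac{\theta}{q_2}=\frac{1}{q_0},\qquad
\frac{1-\theta}{r_1}+\frac{\theta}{r_2}=\frac{1}{r_0},
\]
together with $\beta_1(1-\theta)+\beta_2\theta=\beta_0$, which are precisely the identities $(H)$ applied to the shifted indices $p_i/(1-\theta)$, $p_j/\theta$, etc. Thus Theorem \ref{thm:factorization} yields
\[
T^{p_0,r_0}_{q_0,\beta_0}\leftrightarrow
T^{p_1/(1-\theta),r_1/(1-\theta)}_{q_1/(1-\theta),\beta_1(1-\theta)}
\cdot T^{p_2/\theta,r_2/\theta}_{q_2/\theta,\beta_2\theta},
\]
and since $X_0\leftrightarrow X_1\cdot X_2$ amounts to $X_0=X_1\bullet X_2$ with equivalence of quasi-norms (as observed right after Definition \ref{defn:mf}), combining the two displays closes the chain.

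The hard part is really not in the interpolation step itself, which is essentially a bookkeeping exercise once the factorization is known; it lies in the preliminary verification that the hypotheses of the Kalton--Mitrea formula are met by our possibly quasi-Banach, possibly non-separable tent spaces. The A-convexity check via the explicit power formula, and the observation that separability of a single endpoint suffices under the Fatou property, are what make the quasi-Banach generalization run without restriction beyond $\min(p_1,p_2)<\infty$.
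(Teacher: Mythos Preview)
Your proposal is correct and follows essentially the same approach as the paper's proof: verify the Kalton--Mitrea hypotheses (Fatou property via Lemma~\ref{lem:Fatou}, lattice $r$-convexity via the power realization, separability of one endpoint from $\min(p_1,p_2)<\infty$), apply the generalized Calder\'on product formula, and then identify the resulting Calder\'on product with $T^{p_0,r_0}_{q_0,\beta_0}$ using Theorem~\ref{thm:factorization}. The only cosmetic difference is the order of presentation; note also that the observation ``$X_0\leftrightarrow X_1\cdot X_2$ amounts to $X_0=X_1\bullet X_2$'' appears at the start of Section~\ref{sec:inter} rather than right after Definition~\ref{defn:mf}.
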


\begin{proof}
With $(H)_\theta$ and Theorem \ref{thm:factorization}, we have  
$$T^{p_0,r_0}_{q_0,\beta_0}\leftrightarrow
T^{p_1/(1-\theta),r_1/(1-\theta)}_{q_1/(1-\theta),\beta_1(1-\theta)}\cdot
T^{p_2/\theta,r_2/\theta}_{q_2/\theta,\beta_2\theta},$$
which is equivalent to say
$$T^{p_0,r_0}_{q_0,\beta_0}=
T^{p_1/(1-\theta),r_1/(1-\theta)}_{q_1/(1-\theta),\beta_1(1-\theta)}\bullet
T^{p_2/\theta,r_2/\theta}_{q_2/\theta,\beta_2\theta}.$$

Under the condition $\min(p_1,p_2)<\infty$, 
at least one quasi-Banach function lattice in the interpolation couple 
$(T^{p_1,r_1}_{q_1,\beta_1},T^{p_2,r_2}_{q_2,\beta_2})$ is separable. 
And it follows from Minkowski's inequality that, for $i=1,2$, 
the quasi-Banach function lattice $T^{p_i,r_i}_{q_i,\beta_i}$ is $\min(\tau_i,1)$-convex,
where $\tau_i=\min(p_i,q_i,r_i)$. 
In fact, it suffices to apply $$\|f\|_{T^{p_i,r_i}_{q_i,\beta_i}}^{\tau_i}
=\||f|^{\tau_i}\|_{T^{p_i/\tau_i,r_i/\tau_i}_{q_i/\tau_i,\beta_i\tau_i}},\,i=1,2,$$
to the criterion of $r$-convexity, and notice that 
$T^{p_i/\tau_i,r_i/\tau_i}_{q_i/\tau_i,\beta_i\tau_i}(i=1,2)$ are Banach function lattices.
Using the generalized Calder\'{o}n's product formula, we have
\begin{align*}
 (T^{p_1,r_1}_{q_1,\beta_1},T^{p_2,r_2}_{q_2,\beta_2})_{\theta}
&=\big[T^{p_1,r_1}_{q_1,\beta_1}\big]^{1-\theta}\bullet
\big[T^{p_2,r_2}_{q_2,\beta_2}\big]^\theta\\
&=T^{p_1/(1-\theta),r_1/(1-\theta)}_{q_1/(1-\theta),\beta_1(1-\theta)}\bullet
T^{p_2/\theta,r_2/\theta}_{q_2/\theta,\beta_2\theta}=T^{p_0,r_0}_{q_0,\beta_0}.
\end{align*}
This proves the wanted complex interpolation formula. 
\end{proof}

The above interpolation result is plausibly new, 
in view of the novel Whitney averaging factor. 
For the tent spaces without Whitney averages and with $\beta=0$, 
the quasi-Banach complex interpolation
$$(T^{p_1}_{q_1},T^{p_2}_{q_2})_{\theta}=T^{p_0}_{q_0},\,0<\theta<1,$$
where $1/p_0=(1-\theta)/p_1+\theta/p_2$ and $1/q_0=(1-\theta)/q_1+\theta/q_2$,
was considered in \cite[Bernal]{Be}, by another analytical method 
and for the almost full range $0<p_1,p_2,q_1,q_2<\infty$.
For earlier results on the Banach complex interpolation, 
see the references in \cite{Be}.
Using the Kalton-Mitrea complex interpolation method,
\cite[Cohn-Verbitsky]{CV} recover the result in \cite{Be} and obtain additionally 
$$(T^{\infty}_{q},T^{p}_{\infty})_{\theta}=T^{p/\theta}_{q/(1-\theta)},\,0<\theta<1,$$
where $0<p,q<\infty$.
For the weighted analogue of \cite{CV}, 
see \cite[Hofmann-Mayboroda-McIntosh]{HMMc},
where the weight $\beta$ can also be any real number.

Here, by bringing in the endpoint space $T^\infty_\infty$,
we have under Theorem \ref{thm:Interpolation} and the coincidence theorem that,
for the full range $0<p_1,p_2,q_1,q_2\leq\infty$, we have
$$(T^{p_1}_{q_1},T^{p_2}_{q_2})_{\theta}=T^{p_0}_{q_0},\,0<\theta<1,$$
when $\min(p_1,p_2)<\infty$, $1/p_0=(1-\theta)/p_1+\theta/p_2$ and 
$1/q_0=(1-\theta)/q_1+\theta/q_2$. 
With this mild requirement $\min(p_1,p_2)<\infty$\footnotemark
\footnotetext{For the case $\min(p_1,p_2)=\infty$, 
there exist some results in a different context. 
For $\alpha\in[0,1]$ and the space of Carleson measures of order $\alpha$
$$V^\alpha:=\bigg\{d\mu\bigg|\sup_{B\subset\bRn}
\frac{|\mu|(\wh{B})}{|B|^\alpha}<\infty\bigg\},$$
the complex interpolation $(V^0,V^1)_{\alpha}$ was identified in \cite[Theorem 3-(ii)]{AB}
to a space which is strictly smaller than $V^\alpha$. 
In this respect, see also \cite{AM} and \cite{AM1} for relevant results.},
we then cover all the complex interpolation results 
obtained in \cite{Be}, \cite{CV} and \cite{HMMc}.

\section{Multipliers and standard duality} \label{sec:muldual}
Now we turn to the \textit{multiplier} issue, 
which from the \textit{multiplication} point of view,
is more straightforward than the quasi-Banach complex interpolation.

Similarly to the last section, we restrict ourselves to the setting of 
(Banach) function lattices, and the underlying measure space $\Sigma=(\Omega,\mu)$ 
is assumed to be complete and $\sigma$-finite. 
Here, ``complete'' is with respect to the measure, meaning that 
$$\forall\,E\subset\Omega, \mu(E)=0\Longrightarrow \forall\,E'\subset E, \mu(E')=0.$$
Recall that $L^0$ is the collection of all 
complex-valued $\mu$-measurable functions on $\Omega$.

\begin{defn}
Given two Banach function lattices $X_0$ and $X_1$, 
we say that $w\in L^0$ is a \textit{multiplier} from $X_1$ to $X_0$, 
if the associated multiplication mapping 
$$M_w:X_1\rightarrow X_0, v\mapsto vw$$ satisfies
$$\|M_w\|_{X_1\rightarrow X_0}:=\sup_{v\neq 0}\frac{\|vw\|_{X_0}}{\|v\|_{X_1}}<\infty.$$
Denote all the multipliers from $X_1$ to $X_0$ by $M(X_1,X_0)$, 
equipped with $$\|w\|_{M(X_1,X_0)}=\|M_w\|_{X_1\rightarrow X_0}.$$
\end{defn}

Before proceeding to our main results in this section,
we review a cancellation result concerning Calder\'{o}n's product.
It was obtained in \cite[Theorem 2.5 and Corollary 2.6]{S} that
for three Banach function lattices $\{E,F,G\}$ on $\Sigma$,
all with the Fatou property, we have the following \textit{cancellation formula} 
$$E\bullet F=E\bullet G\Longrightarrow F=G.$$
There also holds (see \cite[Theorem 2.8]{S}) that
$$F=M(E,E\bullet F),$$
if both $E$ and $F$ have the Fatou property. 
In particular situations, the above multiplier representation can also be found in 
\cite[Theorem 3.5]{CNS}, which served to prove the uniqueness theorem
of Calder\'{o}n-Lozanovskii's interpolation method.
We mention that in the literature, 
the construction of Calder\'{o}n for intermediate spaces was further 
investigated by Lozanovskii in a series of papers (\cite{L1}, \cite{L2}).

Let us apply these to our tent spaces. 

\begin{thm} \label{thm:multiplier}
With the same assumptions as in Theorem \ref{thm:factorization}
and $1\leq p_i,q_i,r_i\leq\infty$ for $i\in\{0,1,2\}$, 
we have the multiplier identification
$$T^{p_2,r_2}_{q_2,\beta_2}=
M(T^{p_1,r_1}_{q_1,\beta_1},T^{p_0,r_0}_{q_0,\beta_0}).$$ 
\end{thm}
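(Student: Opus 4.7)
The plan is to reduce the multiplier identification to an almost direct application of Schep's formula $F = M(E, E\bullet F)$ cited just before the theorem statement. First I would note that the Banach hypothesis $1\leq p_i,q_i,r_i\leq\infty$ guarantees, via the convexity/completeness discussion in Section~\ref{sec:defn} (namely $\tau_i=\min(p_i,q_i,r_i)\geq 1$), that each of the three tent spaces $T^{p_i,r_i}_{q_i,\beta_i}$ is an honest Banach function lattice on the $\sigma$-finite measure space $\bRt$ equipped with $dzds$. The measure space is complete (after the usual completion of Lebesgue measure), so the abstract framework in which Schep's results apply is in force.

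Next I would invoke Theorem~\ref{thm:factorization} with the H\"olderian relation $(H)$ to get
$$T^{p_0,r_0}_{q_0,\beta_0}\leftrightarrow T^{p_1,r_1}_{q_1,\beta_1}\cdot T^{p_2,r_2}_{q_2,\beta_2},$$
which, in the language of Calder\'on's product introduced in Section~\ref{sec:inter}, translates into the equality (with equivalent norms)
$$T^{p_0,r_0}_{q_0,\beta_0}=T^{p_1,r_1}_{q_1,\beta_1}\bullet T^{p_2,r_2}_{q_2,\beta_2}.$$
At this point the identification of multipliers is reduced to applying Schep's formula $F=M(E,E\bullet F)$ with $E=T^{p_1,r_1}_{q_1,\beta_1}$ and $F=T^{p_2,r_2}_{q_2,\beta_2}$. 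The hypothesis of that formula is that both $E$ and $F$ possess the Fatou property, which is precisely Lemma~\ref{lem:Fatou}. Substituting yields
$$T^{p_2,r_2}_{q_2,\beta_2}=M\big(T^{p_1,r_1}_{q_1,\beta_1},\,T^{p_1,r_1}_{q_1,\beta_1}\bullet T^{p_2,r_2}_{q_2,\beta_2}\big)=M\big(T^{p_1,r_1}_{q_1,\beta_1},\,T^{p_0,r_0}_{q_0,\beta_0}\big),$$
with equivalent norms, which is the claimed identification.

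The only genuinely non-trivial ingredient is the Calder\'on product representation of $T^{p_0,r_0}_{q_0,\beta_0}$, but that is already delivered by Theorem~\ref{thm:factorization}; everything else is checking hypotheses. The ``$\supset$'' inclusion in Schep's formula is the easy direction and follows immediately from $M_2)$ of Lemma~\ref{lem:multiplicationend} together with H\"older once one notes that multiplication by an element of $T^{p_2,r_2}_{q_2,\beta_2}$ sends $T^{p_1,r_1}_{q_1,\beta_1}$ boundedly into the Calder\'on product. The ``$\subset$'' inclusion is where the Fatou property is used: it allows one to construct, for each $v\in T^{p_1,r_1}_{q_1,\beta_1}$ with $vw\in T^{p_0,r_0}_{q_0,\beta_0}$ and each factorization $|vw|=|v_1||v_2|$, an a.e.\ pointwise control of $|w|$ by $\|M_w\|$, via a saturation/extremal-factorization argument of Lozanovskii type. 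Since this cancellation result is quoted verbatim from \cite{S}, the main obstacle in our write-up is simply to verify its hypotheses, which reduces to the Banach lattice assumption $\tau_i\geq 1$ and Lemma~\ref{lem:Fatou}; no further tent-space-specific work is required.
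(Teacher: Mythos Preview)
Your proposal is correct and follows essentially the same approach as the paper: verify that the three tent spaces are Banach function lattices via $\tau_i\geq 1$, invoke Lemma~\ref{lem:Fatou} for the Fatou property, apply Schep's formula $F=M(E,E\bullet F)$ with $E=T^{p_1,r_1}_{q_1,\beta_1}$ and $F=T^{p_2,r_2}_{q_2,\beta_2}$, and then identify $E\bullet F$ with $T^{p_0,r_0}_{q_0,\beta_0}$ via Theorem~\ref{thm:factorization}. Your additional paragraph unpacking the two inclusions in Schep's result is extra commentary not present in the paper's terse proof, but it does not alter the argument.
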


\begin{proof}
For $i\in\{0,1,2\}$, $1\leq p_i,q_i,r_i\leq\infty$ 
implies $\tau_i=\min(p_i,q_i,r_i)\geq1$, 
thus $T^{p_i,r_i}_{q_i,\beta_i}$ is a Banach function lattice.
Using the multiplier representation cited above, 
with the Fatou property guaranteed by Lemma \ref{lem:Fatou}, we have
$$T^{p_2,r_2}_{q_2,\beta_2}=M(T^{p_1,r_1}_{q_1,\beta_1},
T^{p_1,r_1}_{q_1,\beta_1}\bullet T^{p_2,r_2}_{q_2,\beta_2})=
M(T^{p_1,r_1}_{q_1,\beta_1},T^{p_0,r_0}_{q_0,\beta_0}),$$
where the last equality is from Theorem \ref{thm:factorization}:
$T^{p_0,r_0}_{q_0,\beta_0}=T^{p_1,r_1}_{q_1,\beta_1}\bullet T^{p_2,r_2}_{q_2,\beta_2}$. 
\end{proof}

Finally, we look at the duality theory.
Given $\beta_0\in\bR$, we will consider the following $\beta_0$-weighted pairing
$$(f,h)_{\beta_0}:=\iint_{\bRt}f(y,t)h(y,t)t^{-\beta_0-1}dydt.$$
Let $p'$, $q'$ and $r'$ be the dual indice of $1\leq p,q,r\leq\infty$.

\begin{defn}
The $\beta_0$-weighted \textit{K\"othe dual} 
of the Banach $T^{p,r}_{q,\beta}$ is defined as
$$(T^{p,r}_{q,\beta})^*_{\beta_0}:=M(T^{p,r}_{q,\beta},L^1(\bRt,t^{-\beta_0-1}dydt))
=M(T^{p,r}_{q,\beta},T^{1,1}_{1,\beta_0}).$$
\end{defn}

Here, unlike the continuous functional dual $(\cdot)'$,
``K\"othe'' means the dual within the class of Banach function lattices.
For a general account on this aspect, see \cite{LT}.
By the \textit{standard duality}, 
we mean the (K\"othe) dual of the Banach $T^{p,r}_{q,\beta}$ 
when $1\leq p<\infty$, $\beta\in\bR$ and particularly $1\leq\min(q,r)\leq\max(q,r)<\infty$.

\begin{thm} \label{thm:duality}
Under the pairing $(\cdot,\cdot)_{\beta_0}$, we have the following standard duality 
$$T^{p',\,r'}_{q',\,\beta_0-\beta}=(T^{p,r}_{q,\beta})',\,1\leq p,q,r<\infty,\,\beta\in\bR.$$
\end{thm}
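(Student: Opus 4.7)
My plan is to deduce Theorem \ref{thm:duality} directly from Theorem \ref{thm:multiplier}, by identifying the K\"othe dual (as provided by the multiplier theorem) with the continuous functional dual.

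First I would check the Banach-lattice prerequisites. Under $1\le p,q,r<\infty$, the space $T^{p,r}_{q,\beta}$ is Banach by the convexity/completeness discussion of Section \ref{sec:defn} (since $\min(p,q,r)\ge 1$), separable by the density of the countable rational set $E$ (since $p<\infty$), and satisfies the Fatou property by Lemma \ref{lem:Fatou}. Crucially, the finiteness of all three indices places us in Category $A)$ of Definition \ref{defn:tentspaces} and yields an \emph{order-continuous} norm via three successive applications of dominated convergence at the $L^r$-Whitney, $L^q$-conical and $L^p$-outer levels: if $0\le f_n\downarrow 0$ a.e., then $\cW_r(f_n s^{-\beta})\downarrow 0$ a.e., hence $\cA_q(\cW_r(f_n s^{-\beta}))\downarrow 0$ a.e., and finally $\|f_n\|_{T^{p,r}_{q,\beta}}\to 0$.

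Next I would apply Theorem \ref{thm:multiplier} with the H\"olderian triple
$$(p_1,q_1,r_1,\beta_1)=(p,q,r,\beta),\ (p_2,q_2,r_2,\beta_2)=(p',q',r',\beta_0-\beta),\ (p_0,q_0,r_0,\beta_0)=(1,1,1,\beta_0),$$
whose indices all lie in $[1,\infty]$. The theorem delivers
$$T^{p',r'}_{q',\beta_0-\beta}=M(T^{p,r}_{q,\beta},T^{1,1}_{1,\beta_0}).$$
A short Fubini computation (using $|B(y,t)|\simeq t^n$), or equivalently Theorem \ref{thm:coincidence} applied to $T^{1,1}_{1,\beta_0}=T^{1}_{1,\beta_0}$, identifies $T^{1,1}_{1,\beta_0}$ with $L^1(\bRt,t^{-\beta_0-1}dydt)$. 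Thus the right-hand side is exactly the K\"othe dual $(T^{p,r}_{q,\beta})^*_{\beta_0}$ attached to the weighted pairing $(\cdot,\cdot)_{\beta_0}$.

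Finally, I would close the argument by invoking the classical result (see e.g.\ \cite{LT}) that for a Banach function lattice with the Fatou property and order-continuous norm, the continuous functional dual coincides isometrically with the K\"othe dual through the integration pairing. This gives $(T^{p,r}_{q,\beta})'=(T^{p,r}_{q,\beta})^*_{\beta_0}=T^{p',r'}_{q',\beta_0-\beta}$, as claimed. The only genuinely non-routine step in the plan is verifying the order-continuity of the norm; this is precisely what the finiteness of $\max(p,q,r)$ buys, and without it the continuous dual could strictly contain the K\"othe dual, as happens with $L^\infty$.
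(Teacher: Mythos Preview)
Your proposal is correct and shares the first half of the paper's argument verbatim: both apply Theorem \ref{thm:multiplier} with the H\"olderian triple leading to $T^{1,1}_{1,\beta_0}$, and both identify $T^{1,1}_{1,\beta_0}$ with $L^1(\bRt,t^{-\beta_0-1}dydt)$ so that the multiplier space is exactly the K\"othe dual $(T^{p,r}_{q,\beta})^*_{\beta_0}$. The two proofs diverge only in the converse inclusion $(T^{p,r}_{q,\beta})'\subset (T^{p,r}_{q,\beta})^*_{\beta_0}$. The paper does this by hand: given a continuous functional $l$, it restricts $l$ to $L^r$ functions supported on a fixed compact $K\subset\bRt$, uses $L^r$--$L^{r'}$ duality on $K$ to produce a representing kernel $h^K$, exhausts $\bRt$ to get a global $h\in L^{r'}_{\loc}$, and then extends the representation to all of $T^{p,r}_{q,\beta}$ by the density of compactly supported $L^r$ functions (valid because $\max(p,q,r)<\infty$). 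You instead verify directly that the norm on $T^{p,r}_{q,\beta}$ is order-continuous---via three nested dominated-convergence arguments, which is exactly where $p,q,r<\infty$ enters---and then quote the standard Banach-lattice fact \cite{LT} that order-continuity forces the continuous dual and the K\"othe dual to coincide. Your route is cleaner and more conceptual; the paper's route is more self-contained and makes the role of density and local $L^r$-duality explicit. Either way, the substance is the same: finiteness of all three exponents is what prevents singular functionals from appearing.
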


\begin{proof}
By Theorem \ref{thm:multiplier} and the definition of $(\cdot)^*_{\beta_0}$, we have
$$T^{p',\,r'}_{q',\,\beta_0-\beta}=M(T^{p,r}_{q,\beta},T^{1,1}_{1,\beta_0})=
(T^{p,r}_{q,\beta})^*_{\beta_0}\subset (T^{p,r}_{q,\beta})',$$
where the last inclusion follows from the straightforward
identification of multipliers to continuous linear functionals, 
through the pairing $(\cdot,\cdot)_{\beta_0}$.

For the converse, 
suppose that we are given a continuous linear functional $l$ on $T^{p,r}_{q,\beta}$.
Then whenever $K$ is a compact set in $\bRt$, and whenever $f$ is supported in $K$,
with $f\in L^r(K)$, then $\cW_r(f)\in T^p_{q,\beta}$ with 
$$\|f\|_{T^{p,r}_{q,\beta}}=\|\cW_r(f)\|_{T^p_{q,\beta}}\leq C_K\|f\|_{L^r}.$$
Here, $C_K$ is a constant which depends on the compact set $K$,
and also implicitly on the indice $p$, $q$, $r$ and $\beta$.
Thus $l$ induces a continuous linear functional on $L^r(K)$ 
and is representable by $h^K\in L^{r'}(K)$, as $1\leq r<\infty$.
Taking an increasing family of such $K$ which exhausts $\bRt$,
gives us an $h\in L^{r'}_{\loc}$ such that 
$$l(f)=(f,h)_{\beta_0}=\iint_{\bRt}f(y,t)h(y,t)t^{-\beta_0-1}dydt,$$
whenever $f\in L^r$ and has compact support. 
By density arguments, this representation of $l$ by $h$  
extends to all $f \in T^{p,r}_{q,\beta}$,
as we further have $1\leq p,q<\infty$.
By the representation through $(\cdot,h)_{\beta_0}$, we have
$\|l\|=\|M_h\|_{T^{p,r}_{q,\beta}\rightarrow T^{1,1}_{1,\beta_0}}$,
which means $$(T^{p,r}_{q,\beta})'\subset M(T^{p,r}_{q,\beta},T^{1,1}_{1,\beta_0})
=(T^{p,r}_{q,\beta})^*_{\beta_0}=T^{p',r'}_{q',\,\beta_0-\beta}.$$
This then proves the desired standard duality. 
\end{proof}

To end this section, we deduce as corollaries some corresponding known results
on multiplication, factorization and duality,
mainly obtained in the articles \cite[Coifman-Meyer-Stein]{CMS}, 
\cite[Cohn-Verbitsky]{CV} and \cite[Hyt\"onen-Ros\'en]{HR}.

\underline{Relation with Coifman-Meyer-Stein}.
For the standard duality, it was shown in \cite[Theorem 1-(b) and Theorem 2]{CMS} that
$$T^{p'}_2=(T^p_2)^*_{0}=(T^p_2)',\,1\leq p<\infty,$$
which upon using Theorem \ref{thm:coincidence} on the coincidence for $r=q=2$,
then corresponds to our Theorem \ref{thm:duality} in the particular case
$$T^{p',2}_{2,0}=\big(T^{p,2}_{2,0}\big)^*_{0}
=\big(T^{p,2}_{2,0}\big)',\,1\leq p<\infty.$$ 

By the \textit{Carleson duality}, 
we mean the continuous functional dual of $T^{p,r}_{q,\beta}$
for $1\leq p<\infty$, $\beta\in\bR$ and particularly 
$1\leq \min(q,r)\leq\max(q,r)=\infty$. 
Let $\underline{\wh{B}}:=\overline{\wh{B}}$ be the closed tent on base $B$, 
and denote the Carleson measures on $\overline{\bRt}$ by 
$$\overline{\cC}:=\bigg\{d\mu\bigg|\sup_{B\subset \bRn}
\frac{|\mu|\big(\underline{\wh{B}}\big)}{|B|}<\infty\bigg\}.$$ 
Let $\cN= T^1_\infty\cap C_{n.t.}$. 
The classical Carleson duality (\cite[Proposition 1]{CMS}) states that
$$\overline{\cC}=\big(\cN)'.$$
Obviously, our Theorem \ref{thm:duality} on standard duality 
can \textit{not} cover the Carleson duality. 
Nevertheless, we shall mention in Remark 6.2 a consequence of our  
method of proof toward factorization 
of bounded Borel measures on $\overline{\bRt}$ by Carleson measures.

\underline{Relation with Hyt\"onen-Ros\'en}.
To relate their notations, $N_{p,q}$ and $C_{p',q'}$ in \cite{HR} for Banach cases
are just the scales $T^{p,q}_{\infty,0}$ and $T^{p',q'}_{1,-1}$ here, 
and their duality claim is 
$$N_{p,q}=(C_{p',q'})',\,1<p<\infty,\, 1< q\leq\infty.$$
This \textit{Carleson (pre-)duality}, stated in \cite[Theorem 3.2]{HR}, 
then corresponds to our Theorem \ref{thm:duality} in the particular case
$$T^{p,r}_{\infty,0}=(T^{p',r'}_{1,-1})_{-1}^*=(T^{p',r'}_{1,-1})',
\,1<p<\infty,\,1< r\leq\infty.$$ 
At the multiplication side, Theorem 3.1 of \cite{HR} states
$$T^{r,r}_{r,-1/r}\leftarrow T^{p,q}_{\infty,0}\cdot T^{\wt{p},\wt{q}}_{r,-1/r},\,
1\leq r<\infty,\,r\leq p<\infty,\,r\leq q\leq\infty,$$ with $r=(p,\wt{p})_H=(q,\wt{q})_H$.
Again, this is a particular case of our Theorem \ref{thm:factorization}.

\underline{Relation with Cohn-Verbitsky}.
Under the coincidence theorem and Remark \ref{rem:nt},
part $F_2)$ in Theorem \ref{thm:factorizationend} 
for $r_0=q_0$ corresponds to Cohn-Verbitsky
$$T^{p_0}_{q_0}=T^{p_0,q_0}_{q_0}\rightarrow 
T^{p_0,\infty}_{\infty}\cdot T^{\infty,q_0}_{q_0}
=T^{p_0}_{\infty}\cdot T^{\infty}_{q_0}.$$
Meanwhile, with the help of $F_1)$ to produce Whitney multipliers, 
our result $F_3)$ is a further (polarized) factorization of the tent space $T^{p_0,r_0}_{q_0}$.
Of course, we also bring in the endpoint spaces 
$T^\infty_{\infty}$ and $T^{\infty,r_0}_{\infty}$,
which makes the statement broader.
Moreover, we continue with a multiplier discussion basing on the factorization result,
which is seemingly new even in the situation of classical tent spaces.

We also remark that the multiplication side of Theorem \ref{thm:factorization} 
covers Lemma 5.5 in \cite{AAInv} and Lemma 2.4.3 in \cite{M}.
To relate the notations again, the two tent spaces
$\cX$ and $\EP$ in \cite{AAInv}, originally introduced by Kenig-Pipher in \cite{KP}
and by Dahlberg in \cite{D} respectively, correspond to 
$T^{2,2}_{\infty,0}$ and $T^{\infty,\infty}_{2,0}$ here.
Our full scale $T^{p,r}_{q,\beta}$, mainly interested by $\cX^p:=T^{p,2}_{\infty,0}$ 
and $\cY_{\pm}^p:=T^{p,2}_{2,\frac{-1\pm1}{2}}$ for $p$
in some interval containing $2$, 
will be used as \textit{natural function spaces}
in part of a continuation work of \cite{AAInv},
where more backgrounds on boundary value problems of elliptic PDEs can be referred.

\section{Proof of Theorem \ref{thm:factorizationend} on factorization} \label{sec:proof}
To prove $F_3)$ it suffices to show $F_1)$ and $F_2)$ respectively.
Indeed, factorizing $T^{p_0,r_0}_{q_0}$ through $F_1)$ first, 
then using $F_2)$ yields $F_3)$ immediately.
Thus to prove Theorem \ref{thm:factorizationend}, we show $F_1)$ and $F_2)$ in order.

\begin{proof}[Proof of $F_1)$]
Let $W^*(y,t)$ and $\cW^*_{r}(\cdot)(y,t)$ be the Whitney box and 
the $L^r$-Whitney average associated to the point $(y,t)\in\bRt$, 
and to the Whitney parameters 
$$\alpha_1^*=\alpha_1\big(1+\alpha^{1/2}_2\big)^{-1}
\,\,\text{and}\,\, \alpha_2^*=\alpha_2^{1/2},$$ 
where $(\alpha_1,\alpha_2)$ is the pair of consistent Whitney parameters 
we used in Definition \ref{defn:tentspaces}.
Similarly, let $W^{**}$ and $\cW^{**}_{r}(\cdot)$ be the Whitney objects associated to 
$$\alpha_1^{**}=\alpha_1\Big[2\big(1+\alpha^{1/2}_2\big)\alpha^{1/4}_2\Big]^{-1}
\,\,\text{and}\,\, \alpha_2^{**}=\alpha_2^{1/4}.$$
Note that the two resulted pairs of Whitney parameters are also consistent, with
$$0<\alpha^{**}_1<\alpha_1^*<\alpha_1
<\alpha_2^{-1}<(\alpha_2^*)^{-1}<(\alpha_2^{**})^{-1}<1.$$

Moreover, for any $(y,t)\in\bRt$, we have the geometrical relations 
\begin{equation} \label{eqn:multiplier1}
\bigcup_{(z,s)\in W^*(y,t)} W^*(z,s)\subset W(y,t)
\end{equation}
and
\begin{equation} \label{eqn:multiplier2}
\bigcap_{(z,s)\in W^{**}(y,t)} W^*(z,s)\supset W^{**}(y,t).
\end{equation}
The verification on $\alpha_2^*$ and $\alpha_2^{**}$ is straightforward.
For the first inclusion, 
given any $(z,s)\in W^*(y,t)$ and any $(z_0,s_0)\in W^*(z,s)$, we have
$$|z_0-y|\leq|z_0-z|+|z-y|
<\alpha_1^*s+\alpha_1^*t<\alpha_1^*(\alpha_2^*+1)t=\alpha_1t,$$
which implies $(z_0,s_0)\in W(y,t)$.
For the second inclusion,
given any $(z_0,s_0)\in W^{**}(y,t)$ and any $(z,s)\in W^{**}(y,t)$, 
we have $$|z_0-z|\leq|z_0-y|+|y-z|
<2\alpha_1^{**}t<2\alpha_1^{**}\alpha_2^{**}s=\alpha_1^*s,$$
which implies $(z_0,s_0)\in W^*(z,s)$. 
This proves the two relations (\ref{eqn:multiplier1}) and (\ref{eqn:multiplier2}).

Now for any $u\in T^{p_0,r_0}_{q_0}$, we construct $v=\cW^*_{r_0}(u)$. 
Then we have from (\ref{eqn:multiplier1}) that
$$\sup_{(z,s)\in W^{*}(y,t)}\cW^*_{r_0}(u)(z,s)\lesssim \cW_{r_0}(u)(y,t)$$
is valid for any $(y,t)\in\bRt$, thus we know
$$\cW^*_{\infty}(v)\lesssim \cW_{r_0}(u)\,\,\,\text{and}\,\,\, 
\|\cW^*_{\infty}(v)\|_{T^{p_0}_{q_0}}\lesssim\|u\|_{T^{p_0,r_0}_{q_0}}.$$
For $w=u/\cW^*_{r_0}(u)$, we then have from (\ref{eqn:multiplier2}) that
$$\inf_{(z,s)\in W^{**}(y,t)}\cW^*_{r_0}(u)(z,s)\gtrsim \cW^{**}_{r_0}(u)(y,t)$$
is valid for any $(y,t)\in\bRt$, 
thus we know $$\cW^{**}_{r_0}(w)\lesssim1\,\,\,\text{and}\,\,\, 
\|\cW^{**}_{r_0}(w)\|_{T^\infty_\infty}\lesssim1.$$ 

Using the change of Whitney parameters equivalence in Observation \ref{obs:changewhit}, 
$u=vw$ is then the desired factorization for 
$T^{p_0,r_0}_{q_0}\rightarrow T^{p_0,\infty}_{q_0}\cdot T^{\infty,r_0}_\infty$,
$0<p_0,q_0,r_0\leq\infty$.
\end{proof}

\begin{proof}[Proof of $F_2)$]
Observe that we can suppose $0<\max(p_0,q_0)<\infty$.
In fact, nothing has to be done if $p_0=\infty$, 
and the case $q_0=\infty$ is already included in $F_1)$.

We base our arguments on the constructive proof in \cite{CV}.
From the consistency of Whitney parameters, we have $0<\alpha_1<\alpha_2^{-1}<1$.
Then the following relations  
\begin{equation} \label{eqn:whitneygeo1}
\bigcap_{(z,s)\in W(y,t)}B(z,s)\supset B(y,(\alpha_2^{-1}-\alpha_1)t)
\end{equation}
and
\begin{equation} \label{eqn:whitneygeo2}
\bigcup_{(z,s)\in W(y,t)}B(z,s)\subset B(y,(\alpha_2+\alpha_1)t)
\end{equation}
hold for any $(y,t)\in \bRt$. In fact, for the verification of the first inclusion,
given any $x\in B(y,(\alpha_2^{-1}-\alpha_1)t)$
and any $(z,s)\in W(y,t)$, we compute as follow
$$|x-z|\leq|x-y|+|y-z|<(\alpha_2^{-1}-\alpha_1)t+\alpha_1t<s,$$
which implies $x\in B(z,s)$.
Similarly, to verify the second inclusion,
given any $(z,s)\in W(y,t)$ and any $x\in B(z,s)$, we compute as follow
$$|x-y|\leq|x-z|+|z-y|<s+\alpha_1t<(\alpha_2+\alpha_1)t,$$
which implies $x\in B(y,(\alpha_2+\alpha_1)t)$.
This proves the two relations (\ref{eqn:whitneygeo1}) and (\ref{eqn:whitneygeo2}).

As $0<\max(p_0,q_0)<\infty$, the tent space $T^{p_0,r_0}_{q_0}$ lies in Category $A)$
and can be determined by the conical functional $\cA_{q_0}$.
Therefore, $\tilde{u}=\cA_{q_0}(\cW_{r_0}(u))\in L^{p_0}(\bRn)$.
Denote by $P_0[h](y,t)$ the average of $h$ on $B(y,t)\subset\bRn$,
and construct $v=P_0[\tilde{u}^{\tilde{p}}]^{1/\tilde{p}}$ for some $\tilde{p}<p_0$.
Let $\alpha^*=\alpha_2+\alpha_1>1$, 
then by (\ref{eqn:whitneygeo2}), for any $(y,t)\in\bRt$ 
$$\sup_{(z,s)\in W(y,t)}v(z,s)\lesssim v(y,\alpha^*t)=:v^*(y,t).$$ 
Thus we have $\cW_{\infty}(v)(y,t)\lesssim v^*(y,t)$, and there holds
\begin{align*}
\cN(\cW_{\infty}(v))(x)\lesssim\cN(v^*)(x)\leq
\cM(\tilde{u}^{\tilde{p}})^{1/\tilde{p}}(x),\,\forall\,x \in\bRn,
\end{align*}
where $\cN$ is the non-tangential maximal functional, 
$\cM$ is the Hardy-Littlewood maximal operator and 
the last estimate follows from the fact
$$\bigcap_{(y,t)\in\Gamma(x)}B(y,\alpha^*t)\ni x,\,\forall\,x \in\bRn.$$
As $p_0/\tilde{p}>1$, then by maximal theorem, we have
$$\|v\|_{T^{p_0,\infty}_{\infty}}\lesssim
\|\cM(\tilde{u}^{\tilde{p}})^{1/\tilde{p}}\|_{L^{p_0}}
\lesssim\|\tilde{u}\|_{L^{p_0}}=\|u\|_{T^{p_0,r_0}_{q_0}}.$$

Now we turn to $w=u/v$. Let $\alpha_*=\alpha_2^{-1}-\alpha_1\in(0,1)$, 
then by (\ref{eqn:whitneygeo1})
$$\inf_{(z,s)\in W(y,t)}v(z,s)\gtrsim v(y,\alpha_*t)$$ 
is valid for any $(y,t)\in\bRt$. By H\"{older}'s inequality, there holds
\begin{equation} \label{eqn:holder}
\|h^{-1}\|^{-1}_{L^q(d\nu)}\leq\|h\|_{L^r(d\nu)},
\,\forall\, q>0,\,\forall\,r>0, 
\end{equation}
when $d\nu$ is a probability measure on $\bRn$. 
Applying this estimate with $h=\tilde{u}$, $r=\wt{p}$,
$q=q_0$ and $d\nu(x)=|B(y,\alpha_*t)|^{-1}\chi_{B(y,\alpha_*t)}(x)dx$, 
we have for any $(y,t)\in\bRt$
\begin{align*}\inf_{(z,s)\in W(y,t)}v(z,s)
 &\gtrsim P_0[\tilde{u}^{\tilde{p}}]^{1/\tilde{p}}(y,\alpha_*t)\\
&\geq P_0[\tilde{u}^{-q_0}]^{-1/q_0}(y,\alpha_*t)
\gtrsim P_0[\tilde{u}^{-q_0}]^{-1/q_0}(y,t),
\end{align*}
where the last estimate follows from $0<\alpha_*<1$ and $-1/q_0<0$.
We write $\|\cdot\|_c=\|\cdot\|_{T^\infty_{1,-1}}$ 
for the Carleson norm of measurable functions on $\bRt$,
and let $$d\mu(y,t)=\mu(y,t)dydt=\cW_{r_0}(u)^{q_0}(y,t)t^{-1}dydt.$$
The above pointwise estimates on $v$ further imply
\begin{align*}
\|\cW_{r_0}(u/v)\|_{T^{\infty}_{q_0}}
&\lesssim \|P_0[\tilde{u}^{-q_0}]^{1/q_0}\cW_{r_0}(u)\|_{T^{\infty}_{q_0}}\\
&= \|P_0[\tilde{u}^{-q_0}]\mu\|^{1/q_0}_{T^\infty_{1,-1}}
=\|P_0[\cA_{1}(\mu(y,t)t)^{-1}]\mu\|^{1/q_0}_c\lesssim 1.
\end{align*}
In the last estimate, we used the lemma below. 

Therefore, we can conclude the proof of $F_2)$.
\end{proof}

We record down the missing part in estimating 
$\|P_0[\cA_{1}(\mu(y,t)t)^{-1}]\mu\|_c\lesssim 1$.
For a non-negative measure $d\mu$ on $\overline{\bRt}$, 
denote its (free) balayage by
$$\overline{\cA}(d\mu)(x):=\iint_{\Gamma(x)}\frac{d\mu(z,s)}{s^{n}},\,x\in\bRn.$$
This way, we can reconstruct from the boundary value 
$\overline{\cA}(d\mu)$ its (free) extension
$$E(d\mu)(y,t):=P_0[\overline{\cA}(d\mu)^{-1}](y,t),\,\forall\,(y,t)\in\bRt.$$
Thus in the desired estimate, with $d\mu(y,t)=\mu(y,t)dydt$ supported in $\bRt$, we have 
$$P_0[\cA_{1}(\mu(z,s)s)^{-1}](y,t)\mu(y,t)dydt=E(d\mu)(y,t) d\mu(y,t).$$

The next lemma is very simple and can be found in \cite[Lemma 2.2]{CV}, 
or one can refer to \cite{AB} directly.
For the completeness, we still provide an argument here.
Recall that $\wh{\underline{B}}$ denotes the closed tent with base $B\subset\bRn$.

\begin{lem} \label{lem:bala}
For any non-negative measure $d\mu$ on $\overline{\bRt}$, we have
$$\|E(d\mu) d\mu\|_{\overline{\cC}}:=\sup_{B\subset\bRn}\frac{1}{|B|}
\iint_{\wh{\underline{B}}}E(d\mu) d\mu\lesssim1.$$
\end{lem}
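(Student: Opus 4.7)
The plan is to unravel all definitions and apply a single Fubini swap, exploiting the elementary duality $x\in B(y,t)\iff (y,t)\in\Gamma(x)$ which couples the ball appearing in $P_0$ with the cone appearing in $\overline{\cA}$. Once this swap is in place, the pointwise cancellation $\overline{\cA}(d\mu)(x)^{-1}\cdot\overline{\cA}(d\mu)(x)=1$ will do essentially all the work.

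Concretely, I would fix a ball $B\subset\bRn$ and write
$$\iint_{\wh{\underline{B}}} E(d\mu)(y,t)\,d\mu(y,t) = \iint_{\wh{\underline{B}}} \frac{1}{|B(y,t)|}\int_{B(y,t)} \overline{\cA}(d\mu)(x)^{-1}\,dx\,d\mu(y,t).$$
Since $(y,t)\in\wh{\underline{B}}$ forces $B(y,t)\subset B$, the inner $x$-integration lives inside $B$. Interchanging the order of integration and recognizing $\chi_{B(y,t)}(x) = \chi_{\Gamma(x)}(y,t)$, together with $|B(y,t)|\simeq t^n$, I reach
$$\int_B \overline{\cA}(d\mu)(x)^{-1}\bigg(\iint_{\wh{\underline{B}}\cap\Gamma(x)}\frac{d\mu(y,t)}{|B(y,t)|}\bigg)\,dx.$$
The inner $(y,t)$-integral is bounded for each fixed $x\in B$ by a constant multiple of $\overline{\cA}(d\mu)(x)$ (just drop the restriction to $\wh{\underline{B}}$). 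After cancellation with the outer factor $\overline{\cA}(d\mu)(x)^{-1}$, what remains is $\int_B dx = |B|$, so dividing by $|B|$ and taking the supremum over $B$ yields the claim.

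I do not anticipate any serious obstacle; the whole argument is only a few lines of computation. The only point that deserves a small check is that the pointwise cancellation is harmless on the possibly exceptional sets $\{\overline{\cA}(d\mu)=0\}$ and $\{\overline{\cA}(d\mu)=\infty\}$. On the first, any $(y,t)\in\Gamma(x)$ carrying positive $d\mu$-mass would contradict the vanishing of $\overline{\cA}(d\mu)(x)=\iint_{\Gamma(x)}s^{-n}d\mu$, so the inner bracket also vanishes and the indeterminate form $0\cdot\infty$ is read as $0$; on the second, $\overline{\cA}(d\mu)(x)^{-1}=0$ kills the integrand. So the integrand is uniformly bounded on $B$ and the argument goes through unchanged.
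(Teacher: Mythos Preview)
Your proposal is correct and follows essentially the same route as the paper: expand $E(d\mu)$, use $|B(y,t)|\simeq t^n$, apply Fubini via the duality $x\in B(y,t)\iff (y,t)\in\Gamma(x)$, and bound the inner cone integral by $\overline{\cA}(d\mu)(x)$ to obtain the cancellation $\int_B 1\,dx=|B|$. Your explicit handling of the exceptional sets $\{\overline{\cA}(d\mu)=0\}$ and $\{\overline{\cA}(d\mu)=\infty\}$ is a small bit of extra care that the paper leaves implicit, but the argument is otherwise identical.
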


\begin{proof}
For any ball $B\subset \bRn$, we can estimate by Fubini's theorem that
\begin{align*}
\iint_{\wh{\underline{B}}}\bigg[
\frac{1}{|B(y,t)|}&\int_{B(y,t)}\overline{\cA}(d\mu)^{-1}(x)dx\bigg]d\mu(y,t)\\
&\simeq\iint_{\wh{\underline{B}}}\bigg[\int_{B(y,t)}
\overline{\cA}(d\mu)^{-1}(x)dx\bigg]\frac{d\mu(y,t)}{t^n}\\
&=\int_{\bRn}\overline{\cA}(d\mu)^{-1}(x)
\bigg[\iint_{\wh{\underline{B}}\cap\Gamma(x)}\frac{d\mu(y,t)}{t^n}\bigg]dx\\
&\leq\int_B\overline{\cA}(d\mu)^{-1}(x)\overline{\cA}(d\mu)(x)dx= |B|.
\end{align*}
Taking a supremum over balls $B\subset\bRn$ then proves the Carleson estimate.
\end{proof}

\begin{rem} \label{rem:mesfac}
Denote by $\overline{\cV}$ 
the class of bounded (signed and complex) Borel measures on $\overline{\bRt}$.
Note that the above lemma also implies the factorization
$$\overline{\cV}\rightarrow (T^1_\infty\cap C_{n.t.})\cdot \overline{\cC},$$
while the multiplication side $\overline{\cV}\leftarrow (T^1_\infty\cap C_{n.t.})
\cdot \overline{\cC}$ is just the Carleson's inequality 
(see \cite[p. 63]{Stein} for example).
Indeed, for $d\mu$ bounded on $\overline{\bRt}$,
$$|d\mu|=E(|d\mu|)^{-1}\cdot E(|d\mu|)|d\mu|$$ is then the desire factorization.
First, using the lemma above, we have
$$\|E(|d\mu|)|d\mu|\|_{\overline{\cC}}\lesssim1.$$
And by (\ref{eqn:holder}), we see for any $(y,t)\in\bRt$ that 
$$E(|d\mu|)^{-1}(y,t)\leq 
\bigg(\frac{1}{|B(y,t)|}\int_{B(y,t)}\overline{\cA}(|d\mu|)^{p_0}(x)dx
\bigg)^{1/p_0},\,0<p_0<1.$$ 
Then for any $x\in\bRn$, we have 
$$\cN(E(|d\mu|)^{-1})(x)\leq \cM(\overline{\cA}(|d\mu|)^{p_0})^{1/p_0}(x),$$ 
and by Lebesgue's theorem $E(|d\mu|)^{-1}\in C_{n.t.}$.
By maximal theorem, we also have $E(|d\mu|)^{-1}\in T^1_\infty$, with the factorization estimate
$$\|E(|d\mu|)^{-1}\|_{T^1_\infty}\lesssim\|\overline{\cA}(|d\mu|)\|_{L^1}
\simeq|\mu|\big(\overline{\bRt}\big).$$
\end{rem}

\begin{rem} \label{rem:nt}
In $F_1)$, the case $r_0=\infty$ is trivial.
Suppose $0<r_0<\infty$ and $\cW_{r_0}(u)\in C_{n.t.}$.
As the constructed $v=\cW^*_{r_0}(u)$ is continuous 
and satisfies $\cW^*_{\infty}(v)\lesssim \cW_{r_0}(u)$,
we have $\cW^*_{\infty}(v)\in C_{n.t.}$ after using the fact (\ref{eqn:multiplier1})
$$\lim_{\Gamma(x)\ni(y,t)\rightarrow x}W^*(y,t)=
\lim_{\Gamma(x)\ni(y,t)\rightarrow x}W(y,t)=x,\,\forall\,x\in\bRn,$$
and the dominated convergence theorem.  

In $F_2)$, if $0<\max(p_0,q_0)<\infty$,
we can also verify that $\cW_\infty(v)$ is continuous in $\bRt$ 
and has the property of non-tangential convergence. In fact, 
$$v^{\wt{p}}(y,t)=|B(y,t)|^{-1}\int_{B(y,t)}\wt{u}^{\wt{p}}(x)dx,\,\forall\,(y,t)\in\bRt,$$ 
where $\wt{u}\in L^{p_0}$ and $p_0>\wt{p}$. 
Then $v\in C_{n.t.}$ follows from Lebesgue's theorem. As
$$v(y,\alpha_*t)\lesssim\inf_{(z,s)\in W(y,t)}v(z,s)\leq 
\sup_{(z,s)\in W(y,t)}v(z,s)\lesssim v(y,\alpha^*t)$$ hold true for any $(y,t)\in\bRt$, 
we then have 
$$\cW_\infty(v)=\sup_{(z,s)\in W(y,t)}v(z,s)\in C_{n.t.},$$
which is an easy consequence of the dominated convergence theorem. In all,
the constructed factorization $v$ is in 
$(T^{p_0,\infty}_\infty \cap C_{n.t.})=(T^{p_0}_\infty \cap C_{n.t.})$.
\end{rem}

\section*{Acknowledgement}
As part of the author's thesis project,
the current paper is written under the guidance of Prof. Pascal Auscher,
whose patience is greatly acknowledged.
The author would like to thank Prof. Auscher 
and also Dr. Henri Martikainen for helpful discussions.
This research is supported in part by the ANR project 
``Harmonic Analysis at its Boundaries'', ANR-12-BS01-0013-01.
The author would also like to thank Prof. Dachun Yang and 
Dr. Jonathan Sondow for their continuous encouragements.

\bibliographystyle{amsplain}

\end{document}